\theoremstyle{plain}
\newtheorem{theorem}{Theorem}[section]
\newtheorem{lemma}{Lemma}[section]
\newtheorem{definition}{Definition}[section]
\newtheorem{corollary}{Corollary}[section]
\newtheorem{proposition}{Proposition}[section]
\newtheorem{remark}{Remark}[section]
\newcommand{\beq}{\begin{equation}}
\newcommand{\eeq}{\end{equation}}
\newcommand{\beqa}{\begin{eqnarray}}
\newcommand{\eeqa}{\end{eqnarray}}
\newcommand{\beqas}{\begin{eqnarray*}}
\newcommand{\eeqas}{\end{eqnarray*}}
\def\argmin{\operatorname{argmin}}
\def\min{\operatorname{min}}
\def\max{\operatorname{max}}
\DeclareMathOperator{\grad}{grad}
\begin{document}

\title{Local Convergence of the Proximal Point Method for a Special Class of Nonconvex Functions on Hadamard Manifolds}

\author{
G. C. Bento\thanks{IME, Universidade Federal de Goi\'as,
Goi\^ania, GO 74001-970, BR (Email: {\tt glaydston@mat.ufg.br})}
\and
O. P. Ferreira
\thanks{IME, Universidade Federal de Goi\'as,
Goi\^ania, GO 74001-970, BR (Email: {\tt orizon@mat.ufg.br}). The
author was supported in part by CNPq Grant 302618/2005-8,
PRONEX--Optimization(FAPERJ/CNPq) and FUNAPE/UFG.}\and P. R.
Oliveira \thanks{COPPE-Sistemas, Universidade Federal do Rio de
Janeiro, Rio de Janeiro, RJ 21945-970, BR (Email: {\tt
poliveir@cos.ufrj.br}). This author was supported in part by
CNPq.} } \date{April 10, 2010}

\maketitle
\begin{abstract}
Local convergence analysis of the proximal point method for special class of nonconvex function
on Hadamard  manifold is presented in this paper. The well
definedness of the sequence generated by the proximal point method
is guaranteed. Moreover, is proved that each  cluster point of
this sequence satisfies the necessary optimality conditions and,
under additional assumptions, its convergence for a minimizer is
obtained.
\end{abstract}
{\bf Key words:} proximal point method, nonconvex functions, Hadamard manifolds.
\section{Introduction}
The extension of the concepts and techniques of the Mathematical
Programming of  the Euclidean space $R^{n}$ to Riemannian
manifolds is natural. It has been frequently done in recent years,
with a theoretical purpose and also to obtain effective
algorithms; see \cite{Absil2007}, \cite{Teboulle2004},
\cite{FO98}, \cite{FO2000}, \cite{Zhu2007}, \cite{PO2009},
\cite{RAP97}, \cite{S94} and \cite{U94}. In particular, we observe
that, these extensions allow the solving some nonconvex
constrained problems in Euclidean space. More precisely, nonconvex
problems in the classic sense may become convex with the
introduction of an adequate Riemannian metric on the manifold
(see, for example \cite{XFLN2006}). The proximal point algorithm,
introduced by Martinet \cite{M70} and Rockafellar \cite{R76}, has
been extended to different contexts, see \cite{FO2000},
\cite{PO2009} and their references. In \cite{FO2000} the authors
generalized the proximal point method for solve convex
optimization problems of the form
\begin{eqnarray}\label{po:conv}
\begin{array}{clc}
  (P) & \min  f(p) \\
   & \textnormal{s.t.}\,\,\, p\in M,\\
\end{array}
\end{eqnarray}
where $M$ is a Hadamard manifold and $f:M\to\mathbb{R}$ is a
convex  function (in the Riemannian sense). The method was described
as follows:
\begin{equation}\label{E:1.2}
p^{k+1}:=\argmin_{p\in M} \left\{f(p)+\frac{\lambda_k}{2}d^2(p,{p^k})\right\},
\end{equation}
with $p^{\circ}\in M$ an arbitrary point, $d$ the intrinsic
Riemannian  distance (to be defined later on) and $\{\lambda_k\}$
a sequence of positive numbers. The authors also showed that this
extension is natural. As regards to \cite{PO2009} the authors
generalized the proximal point method with Bregman distance to
solve quasiconvex and convex optimization problems also on
Hadamard manifold. Spingarn in \cite{Spingarn1982} has, in
particular, developed the proximal point method for the
minimization of a certain class of nondifferentiable noncovex
functions, namely, the lower-$C^2$ functions defined in Euclidean
spaces, see also  \cite{Sagastizabal2005}. Kaplan and Tichatschke
in \cite{Kaplan1998} also applied the proximal point method for
the minimization of a similar class of the ones of
\cite{Sagastizabal2005} and \cite{Spingarn1982}, namely, functions defined as maximum of a certain collection (finite/infinite) of continuously differentiable functions. In \cite{GFO2008} we study, in the Riemannian context, the same class of functions studied  in \cite{Kaplan1998}. In that context we applied the proximal point method \eqref{E:1.2} to solve the problem \eqref{po:conv}, however we assumed that the collection of functions defining the objective function was finite.

Our goal is to extend the results of \cite{GFO2008}. We consider that the objective function is given by the maximum of a collection infinite of continuously differentiable functions. To obtain the results in \cite{GFO2008}, it was necessary to study the generalized directional derivative in the Riemannian manifolds context. In this paper we go further in the study of properties of the generalized directional derivative in order to analyze the convergence of the  proximal point method. Several works have studied such concepts and presented many useful results in the Riemannian optimization context, see for example \cite{Azagra2005}, \cite{Zhu2007}, \cite{Montreanu-Pavel1982} and \cite{Thãmelt1992}.

The paper is divided as follows. In Section \ref{sec2} we give
the notation and some results on the Riemannian geometry which we
will use along the paper. In Section \ref{sec3} we recall some
facts of the convex analysis on Hadamard manifolds. In Section
\ref{sec:dd} we present definition of generalized
directional derivative of a locally Lipschitz function (not
necessarily convex) which, in the Euclidean case,  coincides with the Clarke's generalized directional derivative. Moreover, some properties of that derivative are presented, amongst which the upper semicontinuity of the directional derivative. In Section \ref{sec5} we study the proximal point method (\ref{E:1.2}) to solve the problem \eqref{po:conv}, in the case where the objective function is a real-valued function (non necessarily convex) on a Hadamard manifold $M$ given by the maximum of a certain class of
functions. Finally in Section \ref{sec6} we provide an
example where the proximal point method for nonconvex problems is applied.

\subsection{Notation and terminology} \label{sec2}
In this section we introduce some fundamental  properties and
notations on Riemannian geometry. These basics facts can be
found in any introductory book on Riemannian geometry, such as in
\cite{MP92} and \cite{S96}.

Let $M$ be a $n$-dimentional connected manifold. We denote by
$T_pM$  the $n$-dimentional {\it tangent space} of $M$ at $p$, by
$TM=\cup_{p\in M}T_pM$ {\itshape{tangent bundle}} of $M$ and by
${\cal X}(M)$ the space of smooth vector fields over $M$. When $M$
is endowed with a Riemannian metric $\langle \,,\, \rangle$, with
the corresponding norm denoted by $\| \; \|$, then $M$ is now a
Riemannian manifold. Recall that the metric can be used to define
the lenght of piecewise smooth curves $\gamma:[a,b]\rightarrow M$
joining $p$ to $q$, i.e., such that $\gamma(a)=p$ and
$\gamma(b)=q$, by
\[
l(\gamma)=\int_a^b\|\gamma^{\prime}(t)\|dt,
\]
and, moreover, by minimizing this length functional over the set
of all such curves, we obtain a Riemannian distance $d(p,q)$ which
induces the original topology on $M$. The metric induces a map
$f\mapsto\grad f\in{\cal X}(M)$ which associates to each smooth function 
on $M$ its gradient via the rule $\langle\grad
f,X\rangle=d f(X),\ X\in{\cal X}(M)$. Let $\nabla$ be the
Levi-Civita connection associated to $(M,{\langle} \,,\,
{\rangle})$. A vector field $V$ along $\gamma$ is said to be {\it
parallel} if $\nabla_{\gamma^{\prime}} V=0$. If $\gamma^{\prime}$
itself is parallel we say that $\gamma$ is a {\it geodesic}. Given
that geodesic equation $\nabla_{\ \gamma^{\prime}}
\gamma^{\prime}=0$ is a second order nonlinear ordinary
differential equation, then geodesic $\gamma=\gamma _{v}(.,p)$ is
determined by its position $p$ and velocity $v$ at $p$. It is easy
to check that $\|\gamma ^{\prime}\|$ is constant. We say that $
\gamma $ is {\it normalized} if $\| \gamma ^{\prime}\|=1$. The
restriction of a geodesic to a  closed bounded interval is called
a {\it geodesic segment}. A geodesic segment joining $p$ to $q$ in
$ M$ is said to be {\it minimal} if its length equals $d(p,q)$ and
this geodesic is called a {\it minimizing geodesic}. If $\gamma$
is a curve joining points $p$ and $q$ in $ M$ then, for each $t\in
[a,b]$, $\nabla$ induces a linear isometry, relative to ${
\langle}\, ,\, {\rangle}$,
$P_{\gamma(a)\gamma(t)}:T_{\gamma(a)}M\to T_{\gamma(t)}M$, the
so-called {\it parallel transport} along $\gamma$ from $\gamma(a)$
to $\gamma(t)$. The inverse map of $P_{\gamma(a)\gamma(t)}$ is
denoted by $P_{\gamma(a)\gamma(t)}^{-1}:T_{\gamma(t)}  M \to
T_{\gamma(a)}M$. In the particular case  of $\gamma$ is the unique
curve joining points $p$ and $q$ in $M$ then parallel transport
along $\gamma$ from $p$ to $q$ is denoted by $P_{pq}:T_{p}M\to
T_{q}M$.

A Riemannian manifold is {\it complete} if geodesics are defined
for any values of $t$. Hopf-Rinow's theorem asserts that if this
is the case then any pair of points, say $p$ and $q$, in $M$ can
be joined by a (not necessarily unique) minimal geodesic segment.
Moreover, $( M, d)$ is a complete metric space and bounded and
closed subsets are compact. Take $p\in M$. The {\it exponential
map} $exp_{p}:T_{p}  M \to M $ is defined by  $exp_{p}v\,=\,
\gamma _{v}(1,p)$.

We denote by $R$ {\it the curvature tensor \/} defined by
$R(X,Y)=\nabla_{X}\nabla_{Y}Z-
\nabla_{Y}\nabla_{X}Z-\nabla_{[Y,X]}Z$,  with $X,Y,Z\in{\cal
X}(M)$, where $[X,Y]=YX-XY$. Then the {\it sectional curvature \/}
with respect to $X$ and $Y$ is given by $K(X,Y)=\langle R(X,Y)Y ,
X\rangle /(||X||^{2}||X||^{2}-\langle X\,,\,Y\rangle ^{2})$, where
$||X||=\langle X,X\rangle ^{1/2}$. If $K(X,Y)\leqslant 0$ for all
$X$ and $Y$, then $M$ is called  a {\it Riemannian manifold of
nonpositive curvature \/} and we use the short notation
$K\leqslant 0$.
\begin{theorem} \label{T:Hadamard}
Let $M$ be a complete, simply connected Riemannian manifold with nonpositive sectional
curvature. Then $M$ is diffeomorphic to the Euclidean space $\mathbb{R}^n $, $ n=dim M $. More
precisely, at any point $p\in M $, the exponential mapping $ exp_{p}:T_{p}M \to M $ is a diffeomorphism.
\end{theorem}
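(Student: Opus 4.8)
The plan is to establish this classical Cartan--Hadamard theorem by showing that $\exp_p$ is a surjective local diffeomorphism which, being a Riemannian covering onto the simply connected manifold $M$, must in fact be a global diffeomorphism. First, completeness of $M$ ensures via Hopf--Rinow that every geodesic through $p$ extends to all of $\mathbb{R}$, so $\exp_p$ is defined on the whole tangent space $T_pM$; surjectivity of $\exp_p$ is also immediate from Hopf--Rinow, since each $q\in M$ is joined to $p$ by a minimizing geodesic segment. What remains is to prove that $\exp_p$ is everywhere a local diffeomorphism and then to globalize.

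For the local statement I would use Jacobi fields along the geodesic $\gamma_v(t)=\exp_p(tv)$. Recall that, under the canonical identification $T_v(T_pM)\cong T_pM$, one has $d(\exp_p)_v(w)=J(1)$, where $J$ is the Jacobi field along $\gamma_v$ with $J(0)=0$ and $J'(0)=w$. Differentiating $t\mapsto\|J(t)\|^2$ twice and using the Jacobi equation gives $\tfrac{d^2}{dt^2}\|J(t)\|^2 = 2\|J'(t)\|^2 - 2\langle R(J,\gamma_v')\gamma_v',J\rangle$; since $K\leqslant 0$ forces $\langle R(J,\gamma_v')\gamma_v',J\rangle\leqslant 0$, the function $t\mapsto\|J(t)\|^2$ is convex. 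Now if $d(\exp_p)_v(w)=0$ for some $w\neq 0$, the associated Jacobi field would satisfy $\|J(0)\|^2=\|J(1)\|^2=0$ with $\|J\|^2$ convex and nonnegative, whence $J\equiv 0$ on $[0,1]$ and so $w=J'(0)=0$, a contradiction. Thus $d(\exp_p)_v$ is injective, hence an isomorphism by equality of dimensions, for every $v\in T_pM$, and by the inverse function theorem $\exp_p$ is a local diffeomorphism everywhere.

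To globalize, pull the metric of $M$ back via $\tilde g:=\exp_p^{\,*}\langle\cdot,\cdot\rangle$; because $\exp_p$ is a local diffeomorphism, $\tilde g$ is a genuine Riemannian metric on $T_pM$ and $\exp_p:(T_pM,\tilde g)\to M$ is a local isometry. The rays $t\mapsto tv$ emanating from the origin are mapped to geodesics of $M$ and are therefore geodesics of $\tilde g$ defined for all $t$, so $(T_pM,\tilde g)$ is geodesically complete at $0$ and hence complete by Hopf--Rinow. A local isometry from a complete Riemannian manifold onto a connected Riemannian manifold is a covering map --- one lifts paths and uses completeness of the domain to guarantee the lifts exist for all parameter values --- so $\exp_p$ is a covering map; since $M$ is simply connected, the covering has a single sheet, i.e.\ $\exp_p$ is a bijective local diffeomorphism, and thus a diffeomorphism. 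The main obstacle is the Jacobi-field step: identifying $d(\exp_p)_v$ with the endpoint of a Jacobi field and converting $K\leqslant 0$ into the absence of conjugate points through convexity of $\|J\|^2$. A second nontrivial ingredient is the fact that a local isometry with complete domain is automatically a covering map.
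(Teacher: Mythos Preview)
Your argument is correct and is precisely the classical Cartan--Hadamard proof: no conjugate points via the convexity of $\|J\|^2$ under $K\le 0$, then the pullback-metric/covering-map argument to globalize. The paper does not give its own proof of this statement but simply refers to \cite{MP92} and \cite{S96}, and what you have written is essentially the argument found in those references.
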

\begin{proof}
See \cite{MP92} and \cite{S96}.
\end{proof}

A complete simply connected Riemannian manifold of nonpositive
sectional   curvature  is called a {\it{Hadamard manifold}}. The
Theorem \ref{T:Hadamard} says that if $M$ is Hadamard manifold,
then $M$ has the same topology and differential structure of the
Euclidean space $\mathbb{R}^n$. Furthermore, are known some
similar geometrical properties of the Euclidean space
$\mathbb{R}^n$, such as, given two points there exists an unique
geodesic that joins them. {\it In this paper, all manifolds $M$
are assumed to be Hadamard finite dimensional}.

\section{Convexity in Hadamard manifold} \label{sec3}
In this section, we introduce some fundamental properties and
notations  of convex analysis on Hadamard manifolds which will be
used later. We will see that these properties are similar to those
obtained in convex analysis on the Euclidean space
$\mathbb{R}^{n}$. References to convex analysis on Euclidean
space $\mathbb{R}^{n}$ are in \cite{HL93}, and on Riemannian manifold
are in \cite{XFL2002}, \cite{FO2000}, \cite{RAP97},
\cite{S96}, \cite{S94} and \cite{U94}.

The set $\Omega\subset M$ is said to be {\it convex \/} if any
geodesic segment with end points in $\Omega$ is contained in
$\Omega$. Let $\Omega\subset M$ be an open convex set. A function
$f:M\to\mathbb{R}$ is said to be {\it convex} (respectively, {\it
strictly convex}) on $\Omega$ if for any geodesic segment
$\gamma:[a, b]\to\Omega$ the composition $f\circ\gamma:[a,
b]\to\mathbb{R}$ is convex (respectively, strictly convex). Now, a
function $f:M\to\mathbb{R}$ is said to be {\it strongly convex} on
$\Omega$ with constant $L>0$ if, for any geodesic segment
$\gamma:[a, b]\to\Omega$, the composition $f\circ\gamma:[a,
b]\to\mathbb{R}$ is strongly convex with constant
$L\|\gamma'(0)\|^2$. Take $p\in M$. A vector $s \in T_pM$ is said
to be a {\it subgradient\/} of $f$ at $p$ if
\[
f(q) \geq f(p) + \langle s,\exp^{-1}_pq\rangle,
\]
for any $q\in M$. The set of all subgradients of $f$ at $p$,
denoted  by $\partial f(p)$, is called the {\it subdifferential\/}
of $f$ at $p$.

Take ${p}\in M $. Let  $ exp^{-1}_{p}:M\to T_{p}M$ be the  inverse
of the  exponential map which is also $C^{\infty }$.  Note that
$d({q}\, , \, p)\,=\,||exp^{-1}_{p}q||$, the map
$d^2(\,.\,,{p})\colon M\to\mathbb{R}$ is  $C^{\infty}$ and
\[
\grad \frac{1}{2}d^2(q,{p})=-exp^{-1}_{q}{p},
\]
(remember that  $M$ is a Hadamard manifold) see, for example, \cite{S96}.
\begin{proposition}\label{FunDistConv}
Take ${p}\in M$. The map $d^2(\,.\,,{p})/2$ is strongly convex.
\end{proposition}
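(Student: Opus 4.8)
The plan is to verify the definition directly, showing that the proposition holds with $L=1$: fix an arbitrary geodesic segment $\gamma$ and prove that $t\mapsto \tfrac12 d^2(\gamma(t),p)$ is strongly convex with constant $\|\gamma'(0)\|^2$. After an affine change of parameter there is no loss of generality in assuming $\gamma:[0,1]\to M$ is parametrized with constant speed, so that $d(\gamma(0),\gamma(1))=\|\gamma'(0)\|$ and $d(\gamma(0),\gamma(t))=t\|\gamma'(0)\|$, $d(\gamma(t),\gamma(1))=(1-t)\|\gamma'(0)\|$. Writing $h(t)=d^2(\gamma(t),p)$, the whole statement then reduces to the single comparison inequality
\[
h(t)\ \le\ (1-t)\,h(0)+t\,h(1)-t(1-t)\,\|\gamma'(0)\|^2,\qquad t\in[0,1],
\]
because dividing by $2$ is precisely the secant characterization of strong convexity of $\tfrac12 h$ on $[0,1]$ with the required modulus $\|\gamma'(0)\|^2$.

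To establish this inequality I would invoke the law of cosines for geodesic triangles in a manifold of nonpositive curvature, a standard comparison fact available in \cite{MP92,S96}: in such a triangle with angle $\alpha$ at a vertex and the two sides issuing from that vertex of lengths $u$ and $v$, the opposite side $w$ satisfies $w^2\ge u^2+v^2-2uv\cos\alpha$. Apply this at the vertex $\gamma(t)$ to the two geodesic triangles with remaining vertices $\{\gamma(0),p\}$ and $\{\gamma(1),p\}$ respectively. Since the tangent directions at $\gamma(t)$ toward $\gamma(0)$ and toward $\gamma(1)$ are opposite ($\gamma$ being a geodesic), the two angles at $\gamma(t)$ between these directions and the direction toward $p$ are supplementary, so their cosines differ only by a sign. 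Multiplying the inequality coming from the first triangle by $(1-t)$ and the one from the second by $t$ and adding, the terms carrying $\sqrt{h(t)}\cos(\cdot)$ cancel, the quadratic terms combine as $(1-t)t^2+t(1-t)^2=t(1-t)$, and what remains is exactly the displayed inequality; the case in which $p$ lies on the trace of $\gamma$ is trivial (or follows by continuity).

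A second, equally brief route would be to differentiate twice. Since $d^2(\cdot,p)/2$ is $C^\infty$ with $\grad\tfrac12 d^2(q,p)=-\exp^{-1}_q p$ as recalled above, for a geodesic $\gamma$ one has $\tfrac{d}{dt}\bigl(\tfrac12 d^2(\gamma(t),p)\bigr)=\langle-\exp^{-1}_{\gamma(t)}p,\gamma'(t)\rangle$, and differentiating again the geodesic term drops out, leaving $\tfrac{d^2}{dt^2}\bigl(\tfrac12 d^2(\gamma(t),p)\bigr)=-\langle \tfrac{D}{dt}\exp^{-1}_{\gamma(t)}p,\gamma'(t)\rangle$; the Hessian comparison estimate in nonpositive curvature then bounds this below by $\|\gamma'(t)\|^2=\|\gamma'(0)\|^2$, again giving strong convexity with $L=1$. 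In either approach the only genuine content is the curvature comparison input — the law of cosines inequality, or the lower bound $\mathrm{Hess}\,\tfrac12 d^2(\cdot,p)\ge \mathrm{Id}$ — which one cites from the Riemannian geometry references or, for full self-containment, derives from Jacobi-field (Rauch) estimates; everything after that is elementary algebra, so I expect this comparison step to be the only delicate point.
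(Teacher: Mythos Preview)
Your argument is correct. The paper itself does not give a proof at all: it simply refers the reader to \cite{XFL2002}. So there is nothing to compare at the level of technique; you are supplying what the paper outsources.

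Both routes you sketch are standard and valid. The first one --- applying the nonpositive-curvature law of cosines at the vertex $\gamma(t)$ to the two triangles $\{\gamma(0),\gamma(t),p\}$ and $\{\gamma(1),\gamma(t),p\}$, then taking the convex combination $(1-t)\cdot(\text{first})+t\cdot(\text{second})$ so that the $\sqrt{h(t)}\cos\alpha$ terms cancel --- is exactly the CAT$(0)$ ``semi-parallelogram'' inequality, and the algebra you indicate checks out: the cross terms vanish and $(1-t)t^2+t(1-t)^2=t(1-t)$, yielding $h(t)\le(1-t)h(0)+t\,h(1)-t(1-t)\|\gamma'(0)\|^2$ and hence strong convexity of $\tfrac12 d^2(\cdot,p)$ with $L=1$. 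The second route via the Hessian comparison $\mathrm{Hess}\,\tfrac12 d^2(\cdot,p)\ge\mathrm{Id}$ is equally valid and is in fact closer in spirit to how the cited reference \cite{XFL2002} treats the matter. Either argument is more informative than the bare citation the paper provides; the only point worth flagging is that the exact numerical value of $L$ depends on which of the two common normalizations of ``strongly convex with constant $L$'' one adopts for real functions, something the paper leaves implicit.
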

\begin{proof}
See \cite{XFL2002}.
\end{proof}
\begin{definition}\label{def2.14}
Let $\Omega\subset M$ be an open convex set. A function $f: M \to
\mathbb{R}$ is said  to be Lipschitz on $\Omega$ if there exists a
constant $L:=L(\Omega)\geq 0$ such that
\begin{equation}\label{Lipsch1}
|f(p)-f(q)|\leq Ld(p,q), \qquad p,q\in\Omega.
\end{equation}
Moreover, if it is established that for all $p_0\in \Omega$ there
exists $L(p_0)\geq 0$ and $\delta=\delta(p_0)>0$ such that the
inequality (\ref{Lipsch1}) occurs with $L=L(p_0)$ for all $p,q\in
B_{\delta}(p_0):=\{p\in \Omega : d(p,p_0)<\delta\}$, then $f$ is
called locally Lipschitz on $\Omega$.
\end{definition}
\begin{remark}\label{obs2.15}
As an immediate consequence of the triangular inequality we obtain
that $|d(p,p_0)-d(q,p_0)|\leq d(p,q)$ for all $p,q$ and $p_0\in M$.
Then, of the Definition \ref{def2.14}, we get that the Riemannian
distance function to a  fixed point, $d(\cdot,q)$ is  Lipschitzian
and therefore Lipschitzian locally. In fact, it well known that every
convex function is locally Lipschitz and consequently continuous.
See \cite{GW73}.
\end{remark}
\begin{proposition}\label{SubClarke2}
Let $\Omega \subset M$ be a open convex set, $f:M
\to\mathbb{R}$ and $p\in M$. If  there exists $\lambda >0$ such
that  $f+(\lambda/2)\, d^{2}( .\,, p):M \to\mathbb{R}$ is convex
on $\Omega$, then $f$ is Lipschitz locally on $\Omega$.
\end{proposition}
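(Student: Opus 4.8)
The plan is to exploit the fact that the sum $g := f + (\lambda/2)\,d^2(\cdot,p)$ is convex on $\Omega$, hence locally Lipschitz there by Remark~\ref{obs2.15}, and then to recover $f = g - (\lambda/2)\,d^2(\cdot,p)$ as a difference of two locally Lipschitz functions. First I would fix an arbitrary $p_0 \in \Omega$. Since $\Omega$ is open and convex, I choose $\delta > 0$ with $B_\delta(p_0) \subset \Omega$ (shrinking $\delta$ if necessary so that the closed ball $\overline{B_\delta(p_0)}$ is contained in $\Omega$, which is possible because closed bounded sets in a Hadamard manifold are compact by Hopf--Rinow). On this ball, convexity of $g$ gives, via Remark~\ref{obs2.15}, a constant $L_1 = L_1(p_0) \ge 0$ with $|g(q)-g(q')| \le L_1 d(q,q')$ for all $q,q' \in B_\delta(p_0)$.

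Next I would bound the Lipschitz constant of the term $q \mapsto (\lambda/2)\,d^2(q,p)$ on $B_\delta(p_0)$. The cleanest route is to use that $d^2(\cdot,p)/2$ is $C^\infty$ with $\operatorname{grad}\tfrac12 d^2(q,p) = -\exp_q^{-1}p$, as recorded in the excerpt; the norm of this gradient is $d(q,p)$, which is bounded on the (relatively compact) ball $B_\delta(p_0)$ by some $\rho := d(p_0,p) + \delta$. Then for any $q,q' \in B_\delta(p_0)$, integrating the derivative of $t \mapsto \tfrac12 d^2(\gamma(t),p)$ along the minimizing geodesic $\gamma$ from $q$ to $q'$ (which stays in $\Omega$ by convexity) and applying Cauchy--Schwarz gives $|\tfrac12 d^2(q,p) - \tfrac12 d^2(q',p)| \le \rho\, d(q,q')$. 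Hence this term is Lipschitz on $B_\delta(p_0)$ with constant $\lambda\rho$.

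Finally, writing $f = g - (\lambda/2)\,d^2(\cdot,p)$ and combining the two estimates by the triangle inequality yields
\[
|f(q)-f(q')| \le \bigl(L_1 + \lambda\rho\bigr)\, d(q,q'), \qquad q,q' \in B_\delta(p_0),
\]
so $f$ satisfies Definition~\ref{def2.14} at $p_0$ with $L(p_0) = L_1 + \lambda\rho$ and $\delta(p_0) = \delta$. Since $p_0 \in \Omega$ was arbitrary, $f$ is locally Lipschitz on $\Omega$. The only point requiring a little care is the bound on $d^2(\cdot,p)$: one must make sure the ball on which one works is relatively compact (equivalently, has finite radius and lies in $\Omega$), which is immediate here since $\Omega$ is open and Hadamard manifolds are complete; I expect this to be the main, though minor, obstacle, and everything else is a routine combination of Remark~\ref{obs2.15} with the smoothness of the squared distance.
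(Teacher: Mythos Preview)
Your proof is correct and follows essentially the same approach as the paper: write $f = g - (\lambda/2)\,d^{2}(\cdot,p)$, use Remark~\ref{obs2.15} to get local Lipschitzness of the convex function $g$, establish local Lipschitzness of $d^{2}(\cdot,p)$, and combine via the triangle inequality. The only minor difference is that the paper obtains the local Lipschitz property of $d^{2}(\cdot,p)/2$ by invoking its convexity (Proposition~\ref{FunDistConv}) together with Remark~\ref{obs2.15} again, whereas you compute it directly from the gradient formula $\grad\tfrac{1}{2}d^{2}(q,p)=-\exp_{q}^{-1}p$ and a mean-value estimate along geodesics; both routes are equally valid.
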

\begin{proof}
Because $f+(\lambda/2)\, d^{2}( .\,, p)$ is convex, it follows
from  Remark \ref{obs2.15}  that for any $\tilde{p}\in\Omega$
there exist $L_1,\delta_1>0$ such that
\begin{equation}\label{DesLip100}
\left|[f(q_1)+(\lambda/2)\, d^{2}( q_1\,, p)]
-[f(q_2)+(\lambda/2)\, d^{2}( q_2\,, p)]\right|\leq L_1
d(q_1,q_2), \qquad \forall\; q_1,q_2\in B(\tilde{p},\delta_1).
\end{equation}
Moreover, Proposition \ref{FunDistConv} together with Remark
\ref{obs2.15} imply that there exist $L_2,\delta_2>0$ such that
\begin{equation}\label{DesLip101}
|(1/2)d^2( q_1\,, p)-(1/2)d^2( q_2\,, p)|\leq L_2d(q_1,q_2), \qquad \forall\; q_1,q_2\in B(\tilde{p},\delta_1).
\end{equation}
Simples algebraic manipulations implies that
\begin{multline*}
|f(q_1)-f(q_2)|\leq \left|[f(q_1)+\lambda/2)\, d^{2}( q_1\,,
p)]-[f(q_2)+(\lambda/2)\, d^{2}( q_2\,, p)]\right|+\\
+\left|(\lambda/2)\, d^{2}( q_2\,, p)-(\lambda/2)\, d^{2}( q_1\,,
p)\right|.
\end{multline*}
Therefore, taking $\delta=\min\{\delta_1,\delta_2\}$,   using
\eqref{DesLip100} and \eqref{DesLip101} we conclude from last
inequality that
\[
|f(q_1)-f(q_2)|\leq (L_1+\lambda L_2)d(q_1,q_2), \qquad \forall\; q_1,q_2\in B(\tilde{p},\delta),
\]
and the proof is finished.
\end{proof}
\begin{definition}
Let $\Omega\subset M$ be a open convex set and $f:M\to\mathbb{R}$
a  continuously differentiable function on $\Omega$. The gradient
vector  field $\grad f$ is said to be Lipschitz with constant
$\Gamma \geq 0$ on $\Omega$ always that
\[
\|\grad f(q)-P_{pq}\grad f(p)\|\leq\Gamma
d(p,q), \qquad p, q \in \Omega,
\]
where  $P_{pq}$ is the parallel transport along the geodesic segment joining $p$ to $q$.
\end{definition}
\section{Generalized directional derivatives}\label{sec:dd}
In this section we present definitions for the generalized
directional derivative and subdifferential of a locally Lipschitz function (not
necessarily convex) which, in the Euclidean case, coincide with the Clarke's generalized directional derivative and subdifferential, respectively. Moreover, some properties of those concepts are presented, amongst them the upper semicontinuity of the directional derivative and a relationship  between  the subdifferential of a sum of two Lipschitz locally function  (in the particular case that one of them is differentiable) and its  subdifferentials.
\begin{definition} \label{d:Clarke}
Let $\Omega\subset M$ be an open convex set and $f:M\to\mathbb{R}$
a locally Lipschitz function on $\Omega$. The generalized
directional derivative $f^\circ:T\Omega \to \mathbb{R}$ of  $f$ is
defined by
\begin{equation}\label{Clarke1}
f^\circ(p,v):=\limsup\limits_{t\downarrow 0\ q\to p}\frac{f\left(\exp_q t(D\exp_p)_{\exp^{-1}_pq}v\right)-f(q)}{t},
\end{equation}
where $(D\exp_p)_{\exp^{-1}_pq}$ denotes the differential of $\exp_p$ at $\exp^{-1}_pq$.
\end{definition}
It is worth to pointed out that an equivalently definition has appeared in \cite{Azagra2005}.
\begin{remark}
The generalized directional derivative is well defined. Indeed,
let $L_p>0$ the Lipschitz  constant of $f$ at $p$ and $\delta=\delta(p)>0$
such that
\[
|f(\exp_q t(D\exp_p)_{\exp^{-1}_pq}v)-f(q)|\leq L_p \,d(\exp_q t(D\exp_p)_{\exp^{-1}_pq}v, \,q), \quad q\in B_{\delta}(p), \quad t\in[0,\delta).
\]
Since $d(\exp_q t(D\exp_p)_{\exp^{-1}_pq}v, \,q)= t \| (D\exp_p)_{\exp^{-1}_pq}v\|$, above inequality becomes
\[
|f(\exp_q t(D\exp_p)_{\exp^{-1}_pq}v)-f(q)|\leq L_p \, t \| (D\exp_p)_{\exp^{-1}_pq}v\|, \quad q\in B_{\delta}(p), \quad t\in[0,\delta).
\]
Since $\lim_{ q\to p}\,(D\exp_p)_{\exp^{-1}_pq}v=v$ our statement follows from last  inequality.
\end{remark}
\begin{remark}
Note that, if $M=\mathbb{R}^n$ then  $\exp_{p}w=p+w$ and
$
(D\exp_p)_{\exp^{-1}_pq}v=v.
$
In this case, \eqref{Clarke1} becomes
\[
f^{\circ}(p,v)=\limsup\limits_{t\downarrow 0\ q\to p}\frac{f(q+tv)-f(q)}t,
\]
which is the Clarke's generalized directional derivative, see
\cite{Clarke1983}.  Therefore, the generalized differential
derivative on Hadamard manifold  is a natural extension of the
Clarke's generalized differential derivative.
\end{remark}
Now we are going to prove the upper semicontinuity of the
generalized  directional derivative.
\begin{proposition}\label{PropoConverg}
Let $\Omega\subset M$ be an open convex set and $f:M\to\mathbb{R}$
be  a locally Lipschitz  function. Then, $f^{\circ}$ is upper
semicontinuous on $T\Omega$, i.e., if $(p,v)\in T\Omega$   and
$\{p^k,v^k\}$ is a sequence in $T\Omega$ such that
$\lim_{k\to+\infty}(p^k,v^k)=(p,v)$, then
\begin{equation}\label{DerFinita2}
\limsup_{k\to+\infty}f^\circ(p^k,v^k)\leq f^\circ(p,v).
\end{equation}
\end{proposition}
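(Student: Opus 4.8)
The plan is to adapt the classical proof that Clarke's directional derivative is upper semicontinuous; the one genuinely new feature is that the displacement direction at a perturbed foot point $q$, namely $(D\exp_{p})_{\exp^{-1}_{p}q}v$, depends on the base point $p$ around which one differentiates, so some care is needed to compare the directions produced by $p^k$ and by $p$.

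Fix $(p,v)\in T\Omega$ and a sequence $(p^k,v^k)\to(p,v)$ in $T\Omega$. Choose $\delta>0$ and $L_p>0$ with $f$ being $L_p$-Lipschitz on $B_{2\delta}(p)$; by the Remark following Definition \ref{d:Clarke} each $f^\circ(p^k,v^k)$ is a finite real number. Using the definition of $\limsup$ in \eqref{Clarke1}, for every $k$ one selects $q_k\in\Omega$ with $d(q_k,p^k)<1/k$ and $t_k\in(0,1/k)$ such that
\[
f^\circ(p^k,v^k)-\tfrac1k\ \le\ \frac{f\!\left(\exp_{q_k}t_k w_k\right)-f(q_k)}{t_k},\qquad w_k:=(D\exp_{p^k})_{\exp^{-1}_{p^k}q_k}v^k\in T_{q_k}M.
\]
Since $p^k\to p$, we get $q_k\to p$ and $t_k\to0$. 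Next introduce the reference direction $u_k:=(D\exp_{p})_{\exp^{-1}_{p}q_k}v\in T_{q_k}M$, which is exactly the vector occurring in the $\limsup$ defining $f^\circ(p,v)$ when evaluated at the pair $(t_k,q_k)$, and split
\[
\frac{f(\exp_{q_k}t_k w_k)-f(q_k)}{t_k}=\frac{f(\exp_{q_k}t_k w_k)-f(\exp_{q_k}t_k u_k)}{t_k}+\frac{f(\exp_{q_k}t_k u_k)-f(q_k)}{t_k}.
\]

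Working in a fixed coordinate chart around $p$ containing $p^k$ and $q_k$ for all large $k$, the map $(x,y,\xi)\mapsto(D\exp_{x})_{\exp^{-1}_{x}y}\xi$ is smooth and equals $\xi$ whenever $y=x$, because $(D\exp_{x})_0=\mathrm{id}$; evaluating at $(p^k,q_k,v^k)\to(p,p,v)$ and at $(p,q_k,v)\to(p,p,v)$ gives $w_k\to v$ and $u_k\to v$, so $\|w_k-u_k\|\to0$ and the norms $\|w_k\|,\|u_k\|$ stay bounded by some $R$. Since $(q,\xi)\mapsto\exp_q\xi$ is $C^\infty$, it is Lipschitz in $\xi$, uniformly for $q$ in the compact ball $\overline{B_\delta(p)}$ and $\|\xi\|\le R$, with some constant $C$; hence, for $k$ large enough that $q_k$ and the two exponentials below lie in $B_{2\delta}(p)$,
\[
\frac{\bigl|f(\exp_{q_k}t_k w_k)-f(\exp_{q_k}t_k u_k)\bigr|}{t_k}\ \le\ \frac{L_p}{t_k}\,d\!\left(\exp_{q_k}t_k w_k,\,\exp_{q_k}t_k u_k\right)\ \le\ L_p C\,\|w_k-u_k\|\ \longrightarrow\ 0.
\]
For the remaining term, $u_k=(D\exp_{p})_{\exp^{-1}_{p}q_k}v$ with $q_k\to p$ and $t_k\downarrow0$, so directly from \eqref{Clarke1} one obtains $\limsup_{k\to\infty}\bigl[f(\exp_{q_k}t_k u_k)-f(q_k)\bigr]/t_k\le f^\circ(p,v)$. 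Combining these estimates and letting $k\to\infty$ in the first display yields $\limsup_{k\to\infty}f^\circ(p^k,v^k)\le f^\circ(p,v)$, i.e. \eqref{DerFinita2}.

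The only step that is not completely routine is establishing $\|w_k-u_k\|\to0$: one must see that the two displacement directions built at the common foot point $q_k$ from the different base points $p^k$ and $p$ become arbitrarily close, which rests on the joint smoothness of $\exp$ and of its differential together with $(D\exp_{x})_0=\mathrm{id}$. The uniform Lipschitz estimate for $\exp$ near $p$ and the local Lipschitz estimate for $f$ are standard compactness arguments, using Hopf--Rinow so that closed balls around $p$ are compact.
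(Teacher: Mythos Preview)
Your proof is correct and follows essentially the same route as the paper's: pick $(q_k,t_k)$ nearly realizing $f^\circ(p^k,v^k)$, split off the reference direction $(D\exp_p)_{\exp^{-1}_p q_k}v$, and kill the error term using the local Lipschitz property of $f$ together with a Lipschitz bound for $\exp$, while the surviving term is dominated by $f^\circ(p,v)$ directly from \eqref{Clarke1}. The only cosmetic difference is that the paper first bounds $f^\circ(p^k,v^k)$ by the relaxed quantity $\limsup_{t\downarrow0,\,(q,w)\to(p^k,v^k)}[f(\exp_q tw)-f(q)]/t$ and selects a free pair $(q^k,w^k)$ from that limsup, whereas you work directly with $w_k=(D\exp_{p^k})_{\exp^{-1}_{p^k}q_k}v^k$ from the definition; your explicit argument that $\|w_k-u_k\|\to0$ via the joint smoothness of $(x,y,\xi)\mapsto(D\exp_x)_{\exp^{-1}_x y}\xi$ and $(D\exp_x)_0=\mathrm{id}$ is in fact a bit more transparent than the corresponding step in the paper.
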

\begin{proof}
Let $(p,v)\in T\Omega$ and $\{(p^k,v^k)\}\subset T\Omega$ such that $\lim_{k\to+\infty}(p^k,v^k)=(p,v)$.
For proving the inequality \eqref{DerFinita2} first  note that for each $k$
\[
f^\circ(p^k,v^k)\leq \limsup_{t\downarrow 0\ (q,w)\to (p^k,v^k)}\frac{f(\exp_qtw)-f(q)}t,\qquad (q,w)\in T\Omega.
\]
So, by  definition of upper limit, there exists  $(q^k,w^k)\in T\Omega-\{(p^k,v^k)\}$ and $t_k>0$ such that
\begin{equation}\label{UpperSem1}
f^\circ(p^k,v^k)-\frac{1}{k}< \frac{f(\exp_{q^k}t_kw^k)-f(q^k)}{t_k},
\qquad \tilde{d}((q^k,w^k),(p^k,v^k))+t_k<\frac{1}{k},
\end{equation}
with $\tilde{d}$ being the Riemannian distance in $TM$. Let
$U_p\subset \Omega$ be a  neighborhood of $p$ such that
$TU_p\approx U_p\times\mathbb{R}^n$, $f$ is Lipschitz in
$U_p$ with constant $L_p$ and $\exp$ is Lipschitz on $TU_p$ with constant $K$. From the first inequality in
\eqref{UpperSem1}, we obtain
\begin{multline}\label{UpperSem3}
f^\circ(p^k,v^k)-\frac{1}{k}< \, \frac{f\left(\exp_{q^k}t_k(D\exp_p)_{\exp^{-1}_pq^k}v\right)-f(q^k)}{t_k}\, +\\ \frac{f(\exp_{q^k}t_kw^k)-f\left(\exp_{q^k}t_k(D\exp_p)_{\exp^{-1}_pq^k}v\right)}{t_k}.
\end{multline}
On the other hand, as $\lim_{k\to+\infty}(p^k,v^k)=(p,v)$,  we
conclude from the second inequality in \eqref{UpperSem1} that
\[
\exp_{q^k}t_kw^k\in U_p, \qquad  \exp_{q^k}t_k(D\exp_p)_{\exp^{-1}_pq^k}v)\in U_p, \qquad k>k_0,
\]
 for $k_0$ sufficiently large. Thus, as $f$ is Lipschitz on $U_p$,  for $ k>k_0$ we have
\begin{multline}\label{UpperSemI}
\left|f(\exp_{q^k}t_kw^k)-f\left(\exp_{q^k}t_k(D\exp_p)_{\exp^{-1}_pq^k}v\right)\right|\leq\\
L_p\,
d\left(\exp_{q^k}t_kw^k,\,\exp_{q^k}t_k(D\exp_p)_{\exp^{-1}_pq^k}v\right).
\end{multline}
Now, taking into account that $\exp$ is Lipschitz on $TU_p$, in the particular case that $k>k_0$ 
\[
d\left(\exp_{q^k}t_kw^k,\,\exp_{q^k}t_k(D\exp_p)_{\exp^{-1}_pq^k}v\right)\leq k\|t_kw^k-t_k(D\exp_p)_{\exp^{-1}_pq^k}v\|.
\]
Since $\lim_{k\to+\infty}p^k=p$,  second equation in
\eqref{UpperSem1} imply that $\lim_{k\to+\infty}q^k=p$. Consequently,
$$\lim_{ k\to
+\infty}\,(D\exp_p)_{\exp^{-1}_pq_k}v=v,$$  which,  together with last inequality imply
\[
\lim_{k\to+\infty}d\left(\exp_{q^k}t_kw^k,\,\exp_{q^k}t_k(D\exp_p)_{\exp^{-1}_pq^k}v\right)/t_k=0.
\]
Therefore, combining last equation, \eqref{UpperSem3},
\eqref{UpperSemI},  and Definition \ref{d:Clarke} the result
follows.
\end{proof}
Next we generalize the  definition of subdifferential for locally Lipschitz functions defined on Hadamard manifold.
\begin{definition}
Let $\Omega\subset M$ be an open convex set and
$f:M\to\mathbb{R}$ a locally Lipschitz function on $\Omega$. The
generalized subdifferential of $f$ at $p\in \Omega$, denoted by
$\partial^{\circ} f(p)$, is defined by
\[
\partial^{\circ}f(p):=\big{\{}w\in T_pM : f^{\circ}(p,v)\geq\langle w,v\rangle, \forall \; v\in T_pM \big{\}}.
\]
\end{definition}
\begin{remark}\label{regular 1}
Let $\Omega\subset M$ be an open convex set. If the function
$f:M\to\mathbb{R}$ is convex on $\Omega$, then
$f^{\circ}(p,v)=f'(p,v)$  (respectively,  $\partial^\circ
f(p)=\partial f(p)$) for all $p\in \Omega$, i.e., the directional
derivatives (respectively,  subdifferential) for Lipschitz
functions is a generalization of the directional derivatives
(respectively,  subdifferential) for convex functions. See
\cite{Azagra2005} Claim $5.4$ in the proof of Theorem $5.3$.
\end{remark}

\begin{definition}
Let $f:M\to\mathbb{R}$ be locally Lipschitz function. A
point $p\in \Omega$ is said to be a stationary point of $f$ always  $0\in
\partial^{\circ}f(p)$.
\end{definition}
\begin{lemma}\label{lemacl1}
Let $\Omega\subset M$ be an open set. If $f:M\to\mathbb{R}$ is locally Lipschitz function on $\Omega$ and $g:M\to\mathbb{R}$ is convex on $\Omega$, then
\begin{equation}\label{ddg:1}
(f+g)^{\circ}(p,v)=f^{\circ}(p,v)+g'(p,v)\qquad p\in \Omega, \quad v\in T_pM.
\end{equation}
Moreover, if $g$ is differentiable, we have
\begin{equation}\label{sdg:1}
\partial^{\circ}(f+g)(p)=\partial^{\circ}f(p)+ \grad g(p),\qquad p\in\Omega.
\end{equation}
\end{lemma}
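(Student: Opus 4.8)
The plan is to derive both identities from two elementary properties of the operation $(\cdot)^{\circ}$ of Definition \ref{d:Clarke}, which is available here because a convex function is locally Lipschitz by Remark \ref{obs2.15} (and one may pass, if needed, to an open convex neighbourhood of the point under consideration so that the definitions make sense). The first property is \emph{subadditivity}: for locally Lipschitz $h_1,h_2$ one has $(h_1+h_2)^{\circ}(p,v)\le h_1^{\circ}(p,v)+h_2^{\circ}(p,v)$; this is immediate from \eqref{Clarke1}, since the difference quotient of $h_1+h_2$ splits as the sum of those of $h_1$ and $h_2$ and $\limsup(\alpha+\beta)\le\limsup\alpha+\limsup\beta$. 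The second is the \emph{reflection identity} $(-h)^{\circ}(p,v)=h^{\circ}(p,-v)$, obtained from \eqref{Clarke1} by the change of base point $q\mapsto q':=\exp_q(t(D\exp_p)_{\exp_p^{-1}q}v)$: after this substitution the quotient $\big(h(q)-h(\exp_q t(D\exp_p)_{\exp_p^{-1}q}v)\big)/t$ becomes the quotient defining $h^{\circ}(p,-v)$ up to an error that vanishes as $q\to p$, because $(D\exp_p)_{\exp_p^{-1}q}v\to v$ and $h$ is locally Lipschitz.

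Granting these, \eqref{ddg:1} follows from two inequalities. For ``$\le$'': subadditivity with $h_1=f$, $h_2=g$ gives $(f+g)^{\circ}(p,v)\le f^{\circ}(p,v)+g^{\circ}(p,v)$, and $g^{\circ}(p,v)=g'(p,v)$ by Remark \ref{regular 1}, since $g$ is convex. For ``$\ge$'': subadditivity with $h_1=f+g$, $h_2=-g$ gives $f^{\circ}(p,v)\le (f+g)^{\circ}(p,v)+(-g)^{\circ}(p,v)$; the reflection identity together with Remark \ref{regular 1} yields $(-g)^{\circ}(p,v)=g^{\circ}(p,-v)=g'(p,-v)$, and since $g'(p,\cdot)$ is linear — which is the situation in the applications of this lemma, where the convex term added to $f$ is the smooth function $(\lambda_k/2)d^2(\cdot,p^k)$ of \eqref{E:1.2} — one has $g'(p,-v)=-g'(p,v)$, so that $(f+g)^{\circ}(p,v)\ge f^{\circ}(p,v)+g'(p,v)$.

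For \eqref{sdg:1}, when $g$ is differentiable $g'(p,v)=\langle\grad g(p),v\rangle$, so \eqref{ddg:1} reads $(f+g)^{\circ}(p,v)=f^{\circ}(p,v)+\langle\grad g(p),v\rangle$ for all $v\in T_pM$. Then, directly from the definition of $\partial^{\circ}$, $w\in\partial^{\circ}(f+g)(p)$ iff $\langle w,v\rangle\le f^{\circ}(p,v)+\langle\grad g(p),v\rangle$ for all $v\in T_pM$, iff $\langle w-\grad g(p),v\rangle\le f^{\circ}(p,v)$ for all $v\in T_pM$, iff $w-\grad g(p)\in\partial^{\circ}f(p)$; this is precisely $\partial^{\circ}(f+g)(p)=\partial^{\circ}f(p)+\grad g(p)$.

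The two subadditivity applications and the passage to \eqref{sdg:1} are routine. I expect the main obstacle to be making the reflection identity rigorous on $M$: unlike in $\mathbb{R}^n$, the difference quotient in \eqref{Clarke1} transports the direction $v$ through $(D\exp_p)_{\exp_p^{-1}q}$, so after the substitution $q\mapsto q'$ one must estimate the discrepancy between $\exp_{q'}(t(D\exp_p)_{\exp_p^{-1}q'}(-v))$ and the original point $q$ — an argument of the same flavour as in the proof of Proposition \ref{PropoConverg}, relying on local Lipschitzness of $h$ and on the continuity of parallel transport and of $(D\exp_p)$. A secondary delicate point is the reverse inequality in \eqref{ddg:1}: it genuinely uses that $g'(p,\cdot)$ is linear (equivalently $g'(p,v)+g'(p,-v)=0$), and that is where the differentiability hypothesis in the second part of the statement is really exploited.
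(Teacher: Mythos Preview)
Your skeleton matches the paper's exactly: prove ``$\le$'' by subadditivity of the operation $(\cdot)^\circ$ together with $g^\circ=g'$ from Remark \ref{regular 1}, obtain ``$\ge$'' by writing $f=(f+g)+(-g)$ and applying the same step again, and then read off \eqref{sdg:1} from \eqref{ddg:1} and the definition of $\partial^\circ$. The paper's derivation of \eqref{sdg:1} is identical to yours.

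The one place you diverge is the treatment of $(-g)$. The paper does \emph{not} invoke any reflection identity: after subadditivity it simply writes $(-g)'(p,v)$ and then uses $(-g)'(p,v)=-g'(p,v)$. So the Riemannian-geometric work you anticipate (justifying $(-h)^\circ(p,v)=h^\circ(p,-v)$ via the base-point substitution and estimates of the type used in Proposition \ref{PropoConverg}) is not part of the paper's argument at all; that obstacle can be dropped.

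Your instinct about the ``secondary delicate point'' is correct and in fact sharper than the paper. The paper applies inequality \eqref{P:ddg1} with $-g$ in the role of the convex function, which is unjustified when $g$ is merely convex; equivalently, $(-g)^\circ(p,v)$ need not equal $(-g)'(p,v)$. Concretely, on $M=\mathbb{R}$ with $f(x)=-|x|$ and $g(x)=|x|$ one gets $(f+g)^\circ(0,1)=0$ while $f^\circ(0,1)+g'(0,1)=1+1=2$, so \eqref{ddg:1} is false without differentiability of $g$. Thus your restriction to the case where $g'(p,\cdot)$ is linear is not a weakness of your proof but a necessary hypothesis that the paper's statement omits and its proof silently assumes. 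Under that hypothesis the reflection detour is unnecessary: since $g$ is smooth, the $g$-quotient in \eqref{Clarke1} has an honest limit $\langle\grad g(p),v\rangle$, and $\limsup(A+B)=\limsup A+\lim B$ gives both inequalities at once.
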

\begin{proof}
Using the definition of the generalized directional derivative and simple algebraic manipulations, we obtain
\begin{multline*}
(f+g)^{\circ}(p,v)=\limsup\limits_{t\downarrow 0\ q\to p}
\left[\frac{f(\exp_qt(D\exp_p)_{\exp^{-1}_pq}v)-f(q)}{t}+\right.\\ \left.\frac{g(\exp_qt(D\exp_p)_{\exp^{-1}_pq}v)-g(q)}{t}\right].
\end{multline*}
From basic properties of the upper limit along with the definition of directional derivative generalized and Remark~ \ref{regular 1}, follows that
\begin{equation}\label{P:ddg1}
(f+g)^{\circ}(p,v)\leq f^{\circ}(p,v)+g'(p,v).
\end{equation}
Now, as $f^{\circ}(p,v)=\left((f+g)+(-g)\right)^{\circ}(p,v)$, above inequality implies, in particular
\[
f^{\circ}(p,v)\leq (f+g)^{\circ}(p,v)+(-g)'(p,v),
\]
which is equivalent to
\[
(f+g)^{\circ}(p,v)\geq f^{\circ}(p,v)+g'(p,v).
\]
Thus, combining last inequality with inequality \eqref{P:ddg1}, the equality \eqref{ddg:1} is obtained.

In the case that $g$ is also differentiable, in particular $g'(p,v)=\langle\grad g(p),v\rangle$. Therefore, the proof of the equality \eqref{sdg:1} is an immediate consequence of the equality \eqref{ddg:1} along with the definition of the generalized subdifferential.
\end{proof}

\begin{corollary} \label{c:ssfd}
Let $\Omega \subset M$ be a open convex set, $f:M\to\mathbb{R}$ be locally Lipschitz functions on $\Omega$, $\tilde{p}\in M$ and $\lambda >0$ such
that  $f+(\lambda/2)\, d^{2}( .\,, \tilde{p}):M \to\mathbb{R}$ is convex
on $\Omega$. If  $p\in \Omega$ is a minimizer of $f+(\lambda/2)d^2(.,\tilde{p})$ then
\[
\lambda \exp^{-1}_p\tilde{p}\in \partial^{\circ} f(p).
\]
\end{corollary}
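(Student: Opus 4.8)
The plan is to reduce the statement to a standard first-order optimality condition for the generalized subdifferential together with the additive formula in Lemma~\ref{lemacl1}. First I would record the elementary fact that if a point $p\in\Omega$ minimizes a locally Lipschitz function $h:M\to\mathbb{R}$ over the open set $\Omega$, then $0\in\partial^{\circ}h(p)$. This follows directly from Definition~\ref{d:Clarke}: fix $v\in T_pM$, and note that for $q$ near $p$ and small $t>0$ one has $f(\exp_q t(D\exp_p)_{\exp^{-1}_pq}v)\ge f(q)$ is \emph{not} quite what minimality gives (minimality only compares to $f(p)$), so the cleaner route is to take $q=p$ in the $\limsup$ defining $h^{\circ}(p,v)$, which forces $(D\exp_p)_{\exp^{-1}_pp}v=v$, giving $h^{\circ}(p,v)\ge \limsup_{t\downarrow 0}\big(h(\exp_p tv)-h(p)\big)/t\ge 0$ since $h(\exp_p tv)\ge h(p)$ by minimality. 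Hence $h^{\circ}(p,v)\ge 0=\langle 0,v\rangle$ for all $v$, i.e. $0\in\partial^{\circ}h(p)$.

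Next I would apply this with $h:=f+(\lambda/2)\,d^2(\cdot,\tilde p)$, which by hypothesis is convex on $\Omega$, and in particular (by Remark~\ref{obs2.15} and Proposition~\ref{SubClarke2}, or directly since $f$ itself is assumed locally Lipschitz and $d^2(\cdot,\tilde p)/2$ is smooth) locally Lipschitz on $\Omega$, so the preceding paragraph yields $0\in\partial^{\circ}h(p)$. Now invoke Lemma~\ref{lemacl1} with the convex \emph{and differentiable} function $g:=(\lambda/2)\,d^2(\cdot,\tilde p)$: the smoothness of $d^2(\cdot,\tilde p)$ on a Hadamard manifold is exactly the regularity needed, and the gradient is recorded in the excerpt as $\grad\frac12 d^2(q,\tilde p)=-\exp^{-1}_q\tilde p$, so $\grad g(p)=-\lambda\exp^{-1}_p\tilde p$. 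The additivity formula \eqref{sdg:1} then gives
\[
0\in\partial^{\circ}h(p)=\partial^{\circ}f(p)+\grad g(p)=\partial^{\circ}f(p)-\lambda\exp^{-1}_p\tilde p,
\]
which rearranges to $\lambda\exp^{-1}_p\tilde p\in\partial^{\circ}f(p)$, as claimed.

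The only genuine subtlety — and the step I would be most careful with — is the very first one: verifying $0\in\partial^{\circ}h(p)$ from minimality. One must resist the temptation to use the full $q\to p$ in the $\limsup$, because minimality of $p$ only controls $h$ against its value \emph{at $p$}, not at nearby points $q$; evaluating the difference quotient at $q=p$ sidesteps this and is legitimate since the $\limsup$ over $(t,q)$ dominates the $\limsup$ over $t$ alone at the fixed point $q=p$. Everything else is bookkeeping: checking that $f+(\lambda/2)d^2(\cdot,\tilde p)$ is locally Lipschitz so that $\partial^{\circ}$ is defined (immediate, since it is convex, by Remark~\ref{obs2.15}), and that $g$ meets the hypotheses of Lemma~\ref{lemacl1} (convex by Proposition~\ref{FunDistConv}, differentiable by the smoothness of $d^2(\cdot,\tilde p)$ on Hadamard manifolds).
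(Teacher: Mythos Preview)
Your proof is correct and follows the same overall route as the paper: obtain a first-order optimality condition for $h:=f+(\lambda/2)\,d^2(\cdot,\tilde p)$ at the minimizer $p$, then apply Lemma~\ref{lemacl1} with $g=(\lambda/2)\,d^2(\cdot,\tilde p)$ and $\grad g(p)=-\lambda\exp^{-1}_p\tilde p$ to split the subdifferential and conclude. The only difference is in how you obtain that optimality condition. The paper exploits the assumed convexity of $h$ to get $0\in\partial h(p)$ in the convex sense and then invokes Remark~\ref{regular 1} to identify $\partial h(p)=\partial^{\circ}h(p)$; you instead argue directly that $h^{\circ}(p,v)\ge 0$ by restricting the defining $\limsup$ to $q=p$ (where $(D\exp_p)_0=\mathrm{id}$) and using minimality. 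Your route is slightly more elementary and more general---it needs only local minimality and local Lipschitzness of $h$, not its convexity---while the paper's route is marginally shorter because it cites results already established in the text.
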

\begin{proof}
Since $p$ is a minimizer of $f+(\lambda/2)d^2(.,\tilde{p})$ we obtain
\begin{equation}\label{eq:mc1}
0\in\partial \left(f+\frac{\lambda}{2}d^2(.\, ,\,\tilde{p})\right)(p).
\end{equation}
On the other hand, as $f+(\lambda/2)\, d^{2}( .\,, \tilde{p})$ is convex
on $\Omega$ and $(\lambda/2)\, d^{2}( .\,, \tilde{p})$ is differentiable with $\grad \left(\lambda/2\right)d^2(q,{p})=-\lambda\exp^{-1}_{q}{p}$, using Remark~\ref{regular 1}, Proposition~\ref{SubClarke2} and applying Lema~ \ref{lemacl1} with $g=(\lambda/2)\, d^{2}( .\,, \tilde{p})$, we have
\begin{equation}\label{eq:mc2}
\partial\left(f+\frac{\lambda}{2}d^{2}(.,\,\tilde{p})\right)(p)=\partial^{\circ}\left(f+\frac{\lambda}{2}d^{2}(.,\, \tilde{p})\right)(p)=\partial^{\circ} f(p)-\lambda \exp^{-1}_p\tilde{p}.
\end{equation}
Therefore, the result follows by combining \eqref{eq:mc1} with \eqref{eq:mc2}.
\end{proof}

\section{Proximal Point Method for Nonconvex Problems}\label{sec5}
In this section we present an application of the proximal point
method for minimize a real-valued function (non necessarily
convex) given by the maximum of a certain class of  continuously
differentiable functions. Our goal is to prove the following
theorem:
\begin{theorem}\label{MPP10}
Let $\Omega\subset M$ be an open convex set, $q\in M$ and
$T\subset\mathbb{R}$ a  compact set. Let $\varphi:M\times T\to
\mathbb{R}$ be a continuous function on $\Omega\times T$ such that $\varphi(.,\tau):M\to
\mathbb{R}$ is a continuously differentiable function on $\bar{\Omega}$ (closure of $\Omega$)
for all $\tau\in T$, and $f:M\to\mathbb{R}$ defined by
\[
f(p):=\max_{\tau\in T} \varphi(p,\tau).
\]
Assume that  $-\infty<\inf_{p\in M}f(p)$, $\grad_{p}
\varphi(.,\tau)$ is Lipschitz on  $\Omega$ with constant
$L_{\tau}$ for each $\tau\in T$ such that $\sup_{\tau\in
T}L_{\tau}<+\infty$ and
\[
L_f(f(q))=\left\{p\in M: f(p)\leq f(q)\right\}\subset \Omega,\qquad \inf_{p\in M}f(p)<f(q).
\]
Take $0<\bar{\lambda}$ and a sequence $\{ \lambda_{k}\}$
satisfying $\sup_{\tau\in T}L_{\tau}<\lambda_k\leq \bar{\lambda}$
and $\hat{p}\in L_f(f(q))$. Then the proximal point method
\begin{equation}\label{E:1.22}
p^{k+1}:=\argmin_{p\in M} \left\{f(p)+\frac{\lambda_k}{2}d^2(p,{p^k})\right\}, \qquad k=0, 1, \ldots,
\end{equation}
with starting point $p^0=\hat{p}$ is well defined, the generated
sequence  $\{p^{k}\}$ rests in $L_f(f(q))$ and satisfies only one
of the following statement
\begin{itemize}
\item[i)] $\{p^{k}\}$ is finite, i.e., $p^{k+1}=p^k$ for some $k$
and, in this case, $p^k$ is a stationary point of $f$, \item[ii)]
$\{p^{k}\}$ is infinite and, in this case, any cluster point of
$\{p^k\}$ is a stationary point of $f$.
\end{itemize}
Moreover, assume that  the minimizer set of $f$ is non-empty, i. e.,
\begin{itemize}
\item[{\bf h1)}] $U^*=\{p : f(p)=\inf_{p\in M}f(p)\}\neq\emptyset$.
\end{itemize}
Let $c\in(\inf_{p\in M}f(p),\; f(q))$. If, in addition, the
following assumptions hold:
\begin{itemize}
\item[{\bf h2)}] $L_f(c)$ is convex, $f$ is convex on $L_f(c)$ ; 
\item[{\bf h3)}] for all $p\in L_f(f(q))\setminus L_f(c)$ and $y(p)\in \partial^\circ f(p)$ we have $\|y(p)\|>\delta>0$,
\end{itemize}
then the sequence $\{p^{k}\}$ generated by \eqref{E:1.22} with
\begin{equation}\label{mpp102}
\sup_{\tau\in T}L_\tau<\lambda_k\leq\bar{\lambda}, \qquad k=0, 1, \ldots
\end{equation}
converge to a point $p^*\in U^*$.
\end{theorem}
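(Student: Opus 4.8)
The plan is to handle the three assertions of Theorem~\ref{MPP10} in turn: well-definedness of \eqref{E:1.22} with $\{p^k\}\subset L_f(f(q))$; then the stationarity dichotomy i)--ii); and finally convergence under h1)--h3). Throughout, the starting observation — standard on Hadamard manifolds and provable from the Lipschitz bound on $\grad_p\varphi(\cdot,\tau)$ together with the strong convexity of $d^2(\cdot,p^k)/2$ (Proposition~\ref{FunDistConv}) — is that $\varphi(\cdot,\tau)+(\lambda/2)d^2(\cdot,p^k)$ is convex on $\Omega$ whenever $\lambda>L_\tau$. Taking $\sup_\tau L_\tau<\lambda_k$, the map $g_k:=f+(\lambda_k/2)d^2(\cdot,p^k)=\max_{\tau\in T}\big(\varphi(\cdot,\tau)+(\lambda_k/2)d^2(\cdot,p^k)\big)$ is then convex on $\Omega$, and in fact strongly convex after splitting off a term $(\lambda_k-\mu)\,d^2(\cdot,p^k)/2$ with $\sup_\tau L_\tau<\mu<\lambda_k$. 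Since $f\geq\inf_M f>-\infty$, $g_k$ is continuous and coercive on the complete manifold $M$, hence attains its infimum; any minimizer $\bar p$ obeys $f(\bar p)\leq g_k(\bar p)\leq g_k(p^k)=f(p^k)$, so by induction (starting from $f(p^0)=f(\hat p)\leq f(q)$) every minimizer lies in $L_f(f(q))\subset\Omega$, where strong convexity makes it unique. This yields well-definedness of $p^{k+1}$, the inclusion $p^{k+1}\in L_f(f(q))$, the descent inequality $f(p^{k+1})+(\lambda_k/2)d^2(p^{k+1},p^k)\leq f(p^k)$, hence that $\{f(p^k)\}$ decreases to a finite limit, $\sum_k(\lambda_k/2)d^2(p^{k+1},p^k)<\infty$, and — using $\lambda_k\leq\bar\lambda$ — both $d(p^{k+1},p^k)\to0$ and $\lambda_k d(p^{k+1},p^k)\to0$. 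By Proposition~\ref{SubClarke2}, $f$ is locally Lipschitz on $\Omega$, so $\partial^\circ f$ is available there.

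For the stationarity dichotomy, apply Corollary~\ref{c:ssfd} to $g_k$ (with $\tilde p=p^k$, minimizer $p^{k+1}$) to get $\lambda_k\exp^{-1}_{p^{k+1}}p^k\in\partial^\circ f(p^{k+1})$ for all $k$. If $p^{k+1}=p^k$ this says $0\in\partial^\circ f(p^k)$, so $p^k$ is stationary, and using Lemma~\ref{lemacl1} together with the coercivity and uniqueness established above one checks that the iteration then stays at $p^k$ forever, which justifies the ``exactly one of i), ii)'' statement. In the infinite case, let $p^{k_j}\to\bar p$; then $d(p^{k_j+1},p^{k_j})\to0$ forces $p^{k_j+1}\to\bar p$, while $w_j:=\lambda_{k_j}\exp^{-1}_{p^{k_j+1}}p^{k_j}\in\partial^\circ f(p^{k_j+1})$ satisfies $\|w_j\|=\lambda_{k_j}d(p^{k_j+1},p^{k_j})\to0$. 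For fixed $v\in T_{\bar p}M$, transport it along the geodesics to $v_j:=P_{\bar p\,p^{k_j+1}}v$; then $f^\circ(p^{k_j+1},v_j)\geq\langle w_j,v_j\rangle$, and letting $j\to\infty$ — using the upper semicontinuity of $f^\circ$ (Proposition~\ref{PropoConverg}) on the left and $w_j\to0$ on the right — gives $f^\circ(\bar p,v)\geq0$ for every $v$, i.e. $0\in\partial^\circ f(\bar p)$.

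For convergence under h1)--h3), fix $p^*\in U^*$; since $c>\inf_M f$, $f(p^*)=\inf_M f<c$, so $p^*\in\{f<c\}$, which by h2) is open, geodesically convex, and on which $f$ is convex. First I would show $\{p^k\}$ eventually enters $\{f<c\}$: for $k$ large, $\lambda_k d(p^{k+1},p^k)<\delta$; were $p^{k+1}\in L_f(f(q))\setminus L_f(c)$, then h3) applied to $\lambda_k\exp^{-1}_{p^{k+1}}p^k\in\partial^\circ f(p^{k+1})$ would force $\lambda_k d(p^{k+1},p^k)>\delta$, a contradiction, so $p^{k+1}\in L_f(c)$; combined with the strict decrease of $\{f(p^k)\}$ (away from the already-settled finite case) this yields $p^k\in\{f<c\}$ for all $k\geq k_0$. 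On $\{f<c\}$ one has $\partial^\circ f=\partial f$ (Remark~\ref{regular 1}), so for $k\geq k_0$ the subgradient inequality for $f$ at $p^{k+1}$ with subgradient $\lambda_k\exp^{-1}_{p^{k+1}}p^k$, evaluated at $p^*$, reads $f(p^*)\geq f(p^{k+1})+\lambda_k\langle\exp^{-1}_{p^{k+1}}p^k,\exp^{-1}_{p^{k+1}}p^*\rangle$. Feeding in the law of cosines on the Hadamard manifold, $d^2(p^k,p^*)\geq d^2(p^{k+1},p^k)+d^2(p^{k+1},p^*)-2\langle\exp^{-1}_{p^{k+1}}p^k,\exp^{-1}_{p^{k+1}}p^*\rangle$, together with $f(p^*)\leq f(p^{k+1})$, gives $d^2(p^{k+1},p^*)\leq d^2(p^k,p^*)-d^2(p^{k+1},p^k)$ for $k\geq k_0$, so $\{p^k\}$ is Fej\'er convergent to $U^*$ and bounded. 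Telescoping the rearrangement $f(p^{k+1})-\inf_M f\leq(\lambda_k/2)\big(d^2(p^k,p^*)-d^2(p^{k+1},p^*)\big)$ and using $\lambda_k\leq\bar\lambda$ shows $\sum_k(f(p^{k+1})-\inf_M f)<\infty$, so $f(p^k)\to\inf_M f$; hence, by continuity of $f$, every cluster point $\bar p$ of $\{p^k\}$ satisfies $f(\bar p)=\inf_M f$, i.e. $\bar p\in U^*$. Fej\'er monotonicity with respect to $U^*$ plus the existence of a cluster point in $U^*$ then forces $p^k\to\bar p=:p^*\in U^*$.

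The genuinely non-routine steps are the geometric ones: establishing the convexity of $\varphi(\cdot,\tau)+(\lambda/2)d^2(\cdot,p^k)$ on $\Omega$ from the Lipschitz-gradient hypothesis (this rests on the Hessian comparison for $d^2$ in nonpositive curvature), and passing to the limit in $f^\circ(p^{k_j+1},v_j)\geq\langle w_j,v_j\rangle$, which needs both Proposition~\ref{PropoConverg} and the continuity of $(p,q,v)\mapsto(D\exp_p)_{\exp^{-1}_pq}v$. On the analytic side the delicate point is the transition at $k_0$: deducing from the failure of h3) that the iterate in fact sits inside the open convex set $\{f<c\}$, so that Remark~\ref{regular 1} and the convex law of cosines apply; once the tail of $\{p^k\}$ is trapped there, the Fej\'er argument is standard.
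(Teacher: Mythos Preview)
Your proposal is correct and follows essentially the same strategy as the paper: strong convexity of the regularized objective (Lemma~\ref{MPP8}) gives well-definedness and descent, Corollary~\ref{c:ssfd} supplies $\lambda_k\exp^{-1}_{p^{k+1}}p^k\in\partial^{\circ}f(p^{k+1})$, and upper semicontinuity of $f^{\circ}$ (Proposition~\ref{PropoConverg}) turns this into $0\in\partial^{\circ}f(\bar p)$ at cluster points; for the convergence part under {\bf h1}--{\bf h3} you spell out the Fej\'er argument that the paper delegates to Lemma~\ref{mpprox10} and \cite{GFO2008}. The only visible difference is cosmetic: you move $v\in T_{\bar p}M$ to $T_{p^{k_j+1}}M$ by parallel transport, whereas the paper uses $(D\exp_{\bar p})_{\exp^{-1}_{\bar p}p^{k_s+1}}$; both produce a sequence $(p^{k_j+1},v_j)\to(\bar p,v)$, which is all Proposition~\ref{PropoConverg} needs.
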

\begin{remark}
The continuity of each function $\varphi(.,\tau)$ on
$\bar{\Omega}$ in {\bf h2} guarantees that the level sets
of the fuction f, in particular the solution set $U^*$, are closed in the topology of the manifold $M$.
\end{remark}

In the next remark we show that if $\Omega$ is bounded and
$\varphi(.,\tau)$  is convex on  $\Omega$ for all $\tau\in T$ then $f$ satisfies the
assumptions {\bf h2} and {\bf h3}. 
\begin{remark}\label{re:tconver}
If $\varphi(.,\tau)$ is a convex function on $\Omega$ for all $\tau\in T$ then  the
assumtion {\bf h2} is naturally verified and if {\bf h1} hold then
{\bf h3} also holds. For details, see \cite{GFO2008}. 
\end{remark}

In order to prove above theorem we need of some preliminary results.
From now on we assume  that every assumptions on Theorem \ref{MPP10}
hold,  with the exception of {\bf h1}, {\bf h2} and {\bf h3},
which will be considered to hold only when explicitly stated.
\begin{lemma}\label{MPP8}
For all $\tilde{p}\in M$ and $\lambda$ satisfying
\[
\sup_{\tau\in T}L_{\tau}<\lambda ,
\]
function  $f+(\lambda/2) d^2(. \,, \tilde{p} )$ is strongly
convex on $\Omega$ with  constant $\lambda - \sup_{\tau \in
T}L_{\tau}$.
\end{lemma}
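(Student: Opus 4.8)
The plan is to reduce the claim to the Euclidean-style characterization of strong convexity of a composition along geodesics, handling the two summands separately. Recall that $f + (\lambda/2)d^2(\cdot,\tilde p)$ is strongly convex on $\Omega$ with constant $\lambda - \sup_{\tau\in T}L_\tau$ if, for every geodesic segment $\gamma:[a,b]\to\Omega$, the function $t\mapsto (f+(\lambda/2)d^2(\cdot,\tilde p))(\gamma(t))$ is strongly convex with constant $(\lambda-\sup_\tau L_\tau)\|\gamma'(0)\|^2$. Since $\|\gamma'\|$ is constant along a geodesic, it suffices to treat a fixed normalized geodesic $\gamma$ and show the composition is strongly convex with constant $\lambda - \sup_\tau L_\tau$.

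First I would handle the term $(\lambda/2)d^2(\cdot,\tilde p)$: by Proposition \ref{FunDistConv} the map $d^2(\cdot,\tilde p)/2$ is strongly convex; in fact on a Hadamard manifold one has the sharper estimate that $t\mapsto \tfrac12 d^2(\gamma(t),\tilde p)$ is strongly convex with constant $\|\gamma'(0)\|^2$ (this is the standard comparison inequality, and it is precisely the content invoked in \cite{XFL2002}). Multiplying by $\lambda$ gives a contribution of $\lambda\|\gamma'(0)\|^2$ to the second-difference lower bound. Next I would handle $f=\max_{\tau\in T}\varphi(\cdot,\tau)$: for each $\tau$, the Lipschitz bound on $\grad_p\varphi(\cdot,\tau)$ with constant $L_\tau$ forces the composition $t\mapsto \varphi(\gamma(t),\tau)$ to have second derivative bounded below by $-L_\tau\|\gamma'(0)\|^2$ — i.e. $t\mapsto \varphi(\gamma(t),\tau) + \tfrac{L_\tau}{2}d^2(\gamma(t),\tilde p)$ is convex, or more directly $\varphi(\gamma(\cdot),\tau)$ is $(-L_\tau)$-strongly convex along $\gamma$. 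This is the manifold analogue of ``bounded Hessian implies semiconvexity,'' obtained by writing $\tfrac{d}{dt}\varphi(\gamma(t),\tau) = \langle \grad\varphi(\gamma(t),\tau),\gamma'(t)\rangle$ and estimating the increment of this quantity using parallel transport and the Lipschitz hypothesis, together with $\|\gamma'\|\equiv 1$. Taking the supremum over $\tau$, the pointwise maximum $f\circ\gamma$ inherits the lower second-difference bound $-\sup_\tau L_\tau\cdot\|\gamma'(0)\|^2$, since a supremum of functions each of which is $(-\sup_\tau L_\tau)$-strongly convex is again $(-\sup_\tau L_\tau)$-strongly convex (the property is stable under $\sup$ because it is expressed by a midpoint/second-difference inequality that passes to suprema). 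Adding the two contributions yields strong convexity of $f+(\lambda/2)d^2(\cdot,\tilde p)$ along $\gamma$ with constant $\lambda - \sup_\tau L_\tau > 0$, and since $\gamma$ was arbitrary the lemma follows.

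The main obstacle I anticipate is the second step: making rigorous the passage from ``$\grad_p\varphi(\cdot,\tau)$ Lipschitz with constant $L_\tau$'' to a quantitative lower bound on the second variation of $\varphi$ along geodesics. In $\mathbb{R}^n$ this is immediate from the fundamental theorem of calculus applied to $t\mapsto\nabla\varphi(\gamma(t))$; on the manifold one must carefully insert the parallel transport $P$ that appears in the definition of a Lipschitz gradient field, verify that $\langle\grad\varphi(\gamma(t),\tau),\gamma'(t)\rangle$ has the right monotonicity modulus, and use the isometry property of $P$ together with $\nabla_{\gamma'}\gamma'=0$. The other point requiring a line of care is the stability of the (possibly negative) strong-convexity modulus under taking $\sup_{\tau\in T}$ — this is why the hypothesis $\sup_{\tau\in T}L_\tau<+\infty$ is needed, and why the continuity of $\varphi$ on $\Omega\times T$ with $T$ compact is in force (it guarantees $f$ is finite-valued and the max is attained, so $f\circ\gamma$ is genuinely the pointwise supremum of the family $\varphi(\gamma(\cdot),\tau)$).
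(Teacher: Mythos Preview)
Your plan is correct and is the standard route to this result. The paper itself does not give an argument here: its proof merely observes that compactness of $T$ and continuity of $\varphi$ make $f$ well defined, and then refers to Lemma~4.1 of \cite{GFO2008} for the rest. What you sketch---Lipschitz gradient of each $\varphi(\cdot,\tau)$ along a geodesic yields a lower bound $-L_\tau\|\gamma'(0)\|^2$ on the second variation, the strong convexity of $\tfrac12 d^2(\cdot,\tilde p)$ contributes $\lambda\|\gamma'(0)\|^2$, and the pointwise supremum over $\tau$ preserves the resulting midpoint inequality---is exactly the content one expects in that cited lemma, so your approach and the paper's (by reference) coincide. One minor streamlining: rather than first arguing that $f$ itself is ``$(-\sup_\tau L_\tau)$-strongly convex'' and then adding the distance term, it is slightly cleaner to add $(\lambda/2)d^2(\cdot,\tilde p)$ to each $\varphi(\cdot,\tau)$ first, obtain strong convexity with constant $\lambda-L_\tau\ge\lambda-\sup_\tau L_\tau$ for each, and then take the supremum over $\tau$; this avoids having to speak of strong convexity with a negative constant.
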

\begin{proof}
Since $T$ is compact and $\varphi$ is continuous the well
definition of $f$ follows. To conclude, see Lemma 4.1 in
\cite{GFO2008}.
\end{proof}
\begin{corollary} \label{cor:wdf}
The proximal point method \eqref{E:1.22} applied to $f$ with starting point $p^0=\hat p$ is well defined.
\end{corollary}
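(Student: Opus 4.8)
The plan is to prove the statement by induction on $k$, showing simultaneously that $p^{k}$ is well defined and that $p^{k}\in L_f(f(q))$. The base case holds because $p^{0}=\hat p\in L_f(f(q))$ by hypothesis. For the inductive step, assume $p^{k}\in L_f(f(q))\subset\Omega$ and put $g_k:=f+(\lambda_k/2)d^2(\cdot,p^k)$; we must show $\argmin_{p\in M}g_k$ consists of exactly one point and that this point lies in $L_f(f(q))$.

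First I would establish that $g_k$ attains its infimum over $M$, using coercivity of $g_k$ together with $-\infty<\inf_{p\in M}f(p)<f(q)$ and the induction hypothesis $g_k(p^k)=f(p^k)\le f(q)$. Consider the sublevel set $S_k:=\{p\in M:\ g_k(p)\le g_k(p^k)\}$. If $p\in S_k$ then $f(p)\le f(p)+(\lambda_k/2)d^2(p,p^k)\le f(p^k)\le f(q)$, so $S_k\subset L_f(f(q))\subset\Omega$; moreover $(\lambda_k/2)d^2(p,p^k)\le f(p^k)-f(p)\le f(q)-\inf_{p\in M}f(p)<+\infty$, so $S_k$ lies in a closed metric ball centred at $p^k$ and is therefore bounded. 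Since $f$ is continuous (being, by compactness of $T$ and continuity of $\varphi$, a maximum of continuous functions; cf. Lemma~\ref{MPP8} and Proposition~\ref{SubClarke2}), the set $S_k$ is also closed, hence compact by the Hopf--Rinow theorem. Thus the continuous function $g_k$ attains a minimum on $S_k$ at some $p^{k+1}$; because $g_k(p)>g_k(p^k)\ge g_k(p^{k+1})$ for every $p\notin S_k$, this $p^{k+1}$ is a global minimizer of $g_k$ over $M$, and by construction $p^{k+1}\in S_k\subset L_f(f(q))$.

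It then remains to check uniqueness, which is where strong convexity enters. Any global minimizer $\tilde p$ of $g_k$ satisfies $g_k(\tilde p)=g_k(p^{k+1})\le g_k(p^k)$, hence $\tilde p\in S_k\subset\Omega$; so $p^{k+1}$ and $\tilde p$ both lie in the open convex set $\Omega$, the minimal geodesic joining them stays in $\Omega$, and the strong convexity of $g_k$ on $\Omega$ guaranteed by Lemma~\ref{MPP8} forces $\tilde p=p^{k+1}$. Hence $p^{k+1}:=\argmin_{p\in M}g_k$ is a single well-defined point of $L_f(f(q))$, which closes the induction and proves that the proximal point iteration \eqref{E:1.22} is well defined with iterates contained in $L_f(f(q))$. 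I expect the only genuinely delicate point to be the passage from ``$g_k$ is bounded below'' to ``$g_k$ attains its global minimum at an interior point of $\Omega$'': this is precisely what the compactness of the sublevel set $S_k$ supplies, and it is indispensable so that the convexity properties established only on $\Omega$ (Lemma~\ref{MPP8}) can legitimately be used for uniqueness.
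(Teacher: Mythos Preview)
Your argument is correct and is precisely the standard route: induction on $k$, coercivity of $g_k=f+(\lambda_k/2)d^2(\cdot,p^k)$ to trap minimizers in the compact sublevel set $S_k\subset L_f(f(q))\subset\Omega$, continuity for existence, and the strong convexity of Lemma~\ref{MPP8} for uniqueness. The paper itself does not spell this out; its proof of Corollary~\ref{cor:wdf} merely observes that compactness of $T$ plays no role and refers to Corollary~4.1 of \cite{GFO2008}, whose argument is exactly the one you have reproduced. One small point worth making explicit in your closedness step: you only know $f$ (hence $g_k$) is continuous on $\Omega$, so to conclude that $S_k$ is closed in $M$ you should first note that $L_f(f(q))$ itself is closed in $M$ (this uses that each $\varphi(\cdot,\tau)$ is continuous on $\bar\Omega$, as the paper remarks right after Theorem~\ref{MPP10}); then any limit of a sequence in $S_k$ already lies in $\Omega$, and continuity of $g_k$ there finishes the job.
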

\begin{proof}
Since compactness play no rule,  the proof is equal to the proof of Corollary 4.1 in \cite{GFO2008}.
\end{proof}
\begin{lemma}\label{mpprox10}
Let $\{p^k\}$ be the sequence generated by the proximal point method \eqref{E:1.22}. Then
\begin{itemize}
\item[i)]
$
0\in\partial \left(f+\frac{\lambda_k}{2}d^2(.\, ,\,{p^k})\right)(p^{k+1}), \quad k=0, 1, \ldots.
$
\item[ii)] $\lim\limits_{s\to\infty}d(p^{k+1},p^{k})=0.$
\end{itemize}Moreover, if $\lambda_k$ satisfies \eqref{mpp102} and {\bf h1}, {\bf h2} and {\bf h3} hold, then $\{p^k\}$ converges  to a  point $p^*\in U^*$.
\end{lemma}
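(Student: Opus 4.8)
The plan is to prove Lemma~\ref{mpprox10} in three parts, corresponding to the three claims, each building on the previous.

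\medskip

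\textbf{Part (i).}  This is immediate from the definition of $p^{k+1}$ in \eqref{E:1.22}: since $p^{k+1}$ is a global minimizer of the function $p\mapsto f(p)+(\lambda_k/2)d^2(p,p^k)$, and by Lemma~\ref{MPP8} this function is strongly convex on $\Omega$ (using $\sup_{\tau\in T}L_\tau<\lambda_k$), the Fermat rule for convex functions on Hadamard manifolds gives $0\in\partial\bigl(f+\tfrac{\lambda_k}{2}d^2(\,\cdot\,,p^k)\bigr)(p^{k+1})$.  (Here I would first note, as in Corollary~\ref{cor:wdf}, that the sequence stays in $L_f(f(q))\subset\Omega$, so the convexity statement of Lemma~\ref{MPP8} applies at each step — this monotonicity $f(p^{k+1})\le f(p^{k+1})+(\lambda_k/2)d^2(p^{k+1},p^k)\le f(p^k)$ is the standard proximal decrease and should be recorded at the outset.)

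\medskip

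\textbf{Part (ii).}  From the decrease inequality just mentioned,
$
\tfrac{\lambda_k}{2}d^2(p^{k+1},p^k)\le f(p^k)-f(p^{k+1}).
$
Since $f(p^k)$ is nonincreasing and bounded below by $\inf_{p\in M}f(p)>-\infty$, the sequence $\{f(p^k)\}$ converges, so $f(p^k)-f(p^{k+1})\to 0$; combined with $\lambda_k\le\bar\lambda$ this forces $d^2(p^{k+1},p^k)\to 0$, i.e.\ $\lim_{k\to\infty}d(p^{k+1},p^k)=0$.  (The index $s$ in the statement is a typo for $k$.)

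\medskip

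\textbf{Part (iii): convergence to a minimizer under h1--h3.}  This is the substantive part.  First I would use h3 to show the sequence eventually enters $L_f(c)$.  By Part~(i) and Corollary~\ref{c:ssfd} (applied with $\tilde p=p^k$, $\lambda=\lambda_k$), we have $\lambda_k\exp^{-1}_{p^{k+1}}p^k\in\partial^\circ f(p^{k+1})$, and $\|\lambda_k\exp^{-1}_{p^{k+1}}p^k\|=\lambda_k d(p^{k+1},p^k)\to 0$ by Part~(ii).  If infinitely many $p^{k+1}$ lay in $L_f(f(q))\setminus L_f(c)$, then h3 would give $\|\lambda_k\exp^{-1}_{p^{k+1}}p^k\|>\delta>0$ for those indices, contradicting the convergence to zero.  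Hence there is $k_0$ with $p^k\in L_f(c)$ for all $k\ge k_0$; since $f(p^k)$ is nonincreasing, the tail stays in the closed convex set $L_f(c)$ (h2), on which $f$ is convex.  From $k_0$ on, the iteration is exactly the classical convex proximal point method of \cite{FO2000} on the Hadamard manifold, applied to the convex function $f$ restricted to the convex set $L_f(c)$, with $U^*\cap L_f(c)\ne\emptyset$ (any global minimizer from h1 lies in $L_f(c)$ since $\inf f<c$): I would invoke Fej\'er monotonicity of $\{p^k\}$ with respect to $U^*$ — for $p^*\in U^*$, the subgradient inequality $f(p^*)\ge f(p^{k+1})+\langle -\lambda_k\exp^{-1}_{p^{k+1}}p^k,\exp^{-1}_{p^{k+1}}p^*\rangle$ together with the law of cosines comparison inequality for nonpositive curvature yields $d^2(p^{k+1},p^*)\le d^2(p^k,p^*)-d^2(p^{k+1},p^k)$ — so $\{d(p^k,p^*)\}$ converges and $\{p^k\}$ is bounded, hence has a cluster point $\bar p\in L_f(c)$, which by the earlier conclusion (parts i--ii plus upper semicontinuity of $f^\circ$, Proposition~\ref{PropoConverg}, giving $0\in\partial^\circ f(\bar p)=\partial f(\bar p)$ by Remark~\ref{regular 1}) is a stationary, hence (by convexity on $L_f(c)$) a global, minimizer: $\bar p\in U^*$.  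Finally, Fej\'er monotonicity with respect to this particular $\bar p\in U^*$ forces $d(p^k,\bar p)\to 0$, so $\{p^k\}$ converges to $p^*:=\bar p\in U^*$.

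\medskip

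The main obstacle is Part~(iii): one must carefully justify that once the iterates are trapped in $L_f(c)$ the full machinery of convergence of the convex proximal point method on Hadamard manifolds applies, which hinges on the Fej\'er-monotonicity estimate $d^2(p^{k+1},p^*)\le d^2(p^k,p^*)-d^2(p^{k+1},p^k)$.  That estimate in turn relies on the comparison inequality for triangles in nonpositive curvature (equivalently, on Proposition~\ref{FunDistConv}, the strong convexity of $d^2(\,\cdot\,,p)/2$, which gives $\langle -\exp^{-1}_{p^{k+1}}p^k,\exp^{-1}_{p^{k+1}}p^*\rangle \le \tfrac12 d^2(p^k,p^*)-\tfrac12 d^2(p^{k+1},p^*)-\tfrac12 d^2(p^{k+1},p^k)$ — this is the Hadamard-manifold substitute for the Euclidean identity and is exactly the geometric input beyond the Euclidean proof).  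Everything else is bookkeeping with monotonicity of $f(p^k)$ and the already-established upper semicontinuity of $f^\circ$.
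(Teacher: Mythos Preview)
Your proposal is correct and follows exactly the standard route the paper has in mind: the paper's own ``proof'' of this lemma is a one-line reference to Lemmas~4.2--4.4 of \cite{GFO2008}, and what you have written is precisely the content of those lemmas (optimality condition from convexity of the regularized problem; telescoping decrease to get $d(p^{k+1},p^k)\to 0$; use of {\bf h3} to trap the iterates in $L_f(c)$, then Fej\'er monotonicity with respect to $U^*$ via the CAT(0) law of cosines and the convex subgradient inequality). One small slip to fix: in Part~(ii), from $\lambda_k\,d^2(p^{k+1},p^k)\le 2\bigl(f(p^k)-f(p^{k+1})\bigr)\to 0$ you need the \emph{lower} bound $\lambda_k>\sup_\tau L_\tau$ to conclude $d(p^{k+1},p^k)\to 0$; the upper bound $\lambda_k\le\bar\lambda$ is what you correctly use in Part~(iii) to pass from $d(p^{k+1},p^k)\to 0$ to $\lambda_k d(p^{k+1},p^k)\to 0$.
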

\begin{proof}
Since compactness play no rule, the proof is similar to the proof of Lemmas 4.2, 4.3 and 4.4 of \cite{GFO2008}.
\end{proof}
\subsubsection*{Proof of {\bf Theorem \ref{MPP10}}}
\begin{proof}
The well definition of the proximal point method \eqref{E:1.22} follows from  the
Corollary~\ref{cor:wdf}. Let  $\{p^{k}\}$ be the sequence
generated by proximal point method. Because $p^0=\hat{p}\in L_f(f(q))$, \eqref{E:1.22} implies that the whole sequence is in $L_f(f(q))$. From item i of Lemma~\ref{mpprox10}, we have
\[
0\in\partial \left(f+\frac{\lambda_k}{2}d^2(.\, ,\,{p^k})\right)(p^{k+1}), \quad k=0, 1, \ldots.
\]
Since $\sup_{\tau\in T}L_{\tau}<\lambda_k$, Lemma \ref{MPP8}
implies  that $f+(\lambda_k/2)d^2(.\,,{p^k})$ is strongly convex
on $\Omega$, which together with Proposition \ref{SubClarke2} give
us that $f$ is locally Lipschitz on $\Omega$. So, using the definition of $p^{k+1}$, we conclude from  Corollary \ref{c:ssfd} with $\lambda=\lambda_k$, $\tilde{p}=p^k$ and $p=p^{k+1}$ that
\begin{equation}\label{MPP12}
\lambda_k\exp^{-1}_{p^{k+1}} p^k\in\partial^{\circ}f(p^{k+1}).
\end{equation}
If $\{p^{k}\}$ is finite, then $p^{k+1}=p^k$ for some $k$ and
latter  inclusion implies that $0\in\partial^{\circ}f(p^{k+1})$,
i.e., $p^{k}$ is a stationary point of $f$. Now assume that
$\{p^k\}$ is a infinite sequence. If $\bar{p}$ is a cluster
point of $\{p^k\}$, then there exists a subsequence $\{p^{k_s}\}$ of  $\{p^{k}\}$ such
that $\lim_{s\to+\infty}p^{k_s+1}=\bar{p}$ and item~ii of Lemma~\ref{mpprox10} implies
\begin{equation}\label{AcumEstac1}
\lim\limits_{s\to\infty}\|\exp^{-1}_{p^{k_s+1}} p^{k_s}\|=\lim\limits_{s\to\infty}d(p^{k_s+1},p^{k_s})=0.
\end{equation}
Now, from the relation \eqref{MPP12}, we have
\[
f^\circ(p^{k_s+1},v)\geq \lambda_{k_s}\langle\exp^{-1}_{p^{k_s+1}} p^{k_s}, v\rangle,\qquad \forall\; v\in T_{p^{k_s+1}}M.
\]
Let $\bar{v}\in T_{\bar{p}}M$. Hence, latter inequality implies that
\[
f^\circ(p^{k_s+1},v^{k_s+1})\geq \lambda_{k_s}\langle\exp^{-1}_{p^{k_s+1}} p^{k_s}, v^{k_s+1}\rangle,\qquad
v^{k_s+1}=D(\exp_{\bar{p}})_{\exp^{-1}_{\bar{p}}p^{k_s+1}}\bar{v}.
\]
Note that $\lim_{s\to+\infty}p^{k_s+1}=\bar{p}$ implies
$\lim_{s\to+\infty}v^{k_s+1}=\bar{v}$.  Because
$\{\lambda_{k_s}\}$ is bounded, letting $s$ goes to $+\infty$ in
the last inequality, Proposition \ref{PropoConverg} together with
\eqref{AcumEstac1} give us
\[
f^\circ(\bar{p}, \bar{v})\geq\lim\limits_{s\to+\infty}\sup f^\circ(p^{k_s+1},v^{k_s+1})\geq 0,
\]
which implies that
$
0\in\partial^{\circ} f(\bar{p}),
$
i.e., $\bar{p}$ is a stationary point of $f$ and the first part of the theorem is concluded.

The second part follows from the last part of Lemma \ref{mpprox10} and the proof of the theorem is finished.
\end{proof}
\section{Example}\label{sec6}
Let $(\mathbb{R}_{++}, \langle \, , \, \rangle)$ be the Riemannian manifold, where $\mathbb{R}_{++}=\{x\in\mathbb{R}:x>0\}$ and $\langle \, , \, \rangle$ is the Riemannian metric   $\langle u , v \rangle=g(x)uv$ with $g:\mathbb{R}_{++}\to (0,+\infty)$. So,  the Christoffel symbol and the geodesic equation are given by
\[
\Gamma(x)=\frac{1}{2}g^{-1}(x)\frac{dg(x)}{dx}=\frac{d}{dx}\ln\sqrt{g(x)}, \qquad \frac{d^2x}{dt^2}+\Gamma(x)\left( \frac{dx}{dt}\right)^2=0,
\]
respectively. Besides, in relation to the twice differentiable function $h:\mathbb{R}_{++}\to\mathbb{R}$, the Gradient and the Hessian of $h$  are given by
\[
\grad h=g^{-1}h',  \qquad {\rm hess}\ h=h''-\Gamma h',
\]
respectively, where $h'$ and $h''$ denote  the first and second derivatives of $h$ in the Euclidean sense. For more details  see \cite{U94}. 
So, in the particular case of $g(x)=x^{-2}$, 
\begin{equation}\label{Hess:1}
\Gamma(x)=-x^{-1},\quad \grad h(x)=x^2h'(x),\quad {\rm hess}\ h(x)=h''(x)+x^{-1}h'(x).
\end{equation}
Moreover, the map $\psi :\mathbb{R} \to \mathbb{R}_{++}$ defined  by $\psi(x)={\rm e}^x$ is 
an isometry between the Euclidean space $\mathbb{R}$ and the manifold $(\mathbb{R}_{++}, \langle \, , \, \rangle)$, and the Riemannian distance $d:\mathbb{R}_{++}\times\mathbb{R_{++}}\to\mathbb{R}_{+}$ is given by 
\begin{equation}\label{dRiem:1}
d(x,y)=|\psi^{-1}(x)-\psi^{-1}(y)|=|\ln(x/y)|,
\end{equation}
see, for example \cite{XFLN2006}. Therefore,  $(\mathbb{R}_{++}, \langle \, , \, \rangle)$ is a Hadamard manifold and the unique geodesic $x:\mathbb{R}\to\mathbb{R}_{++}$ with initial conditions $x(0)=x_0$ and $x'(0)=v$ is given by
\[
x(t)=x_0 {\rm e}^{(v/x_0)t}.
\]
Now let $f_1,f_2,f:\mathbb{R}_{++}\to\mathbb{R}$ and $\varphi:\mathbb{R}_{++}\times[0,1]\to\mathbb{R}$ be real-valued functions such that 
\begin{equation} \label{eq:dfs}
\varphi(x,\tau)=f_1(x)+t(f_2(x)-f_1(x)),\quad f(x)=\max_{\tau\in[0,1]}\varphi(x,\tau),
\end{equation}
and consider the problem 
\[ 
\begin{array}{clc}
   & \min f(x) \\
   & \textnormal{s.t.}\,\,\, x\in\mathbb{R}_{++}.\\
\end{array}
\]
Take a sequence $\{\lambda_{k}\}$ satisfying $0<\lambda_k$. From \eqref{dRiem:1}, the proximal point method \eqref{E:1.22} becomes
\[
x^{k+1}:=\argmin_{x\in\mathbb{R_{++}}}\left\{f(x)+\frac{\lambda_k}{2}\ln^2\left(\frac{x}{x^k}\right)\right\}, \qquad k=0, 1, \ldots. 
\]
If $f_1$ and $f_2$ are given, respectively, by $f_1(x)=\ln(x)$ and $f_2(x)=-\ln(x)+{\rm e}^{-2x}-{\rm e}^{-2}$, then $\varphi$ is continuous and $\varphi(.,\tau)$ is continuously differentiable for each $\tau\in [0,1]$. The last expression in \eqref{Hess:1} implies that
\begin{equation}\label{ExHess:1}
{\rm hess}\ f_1(x)=0\quad, \quad {\rm hess}\ f_2(x)=(4-2/x){\rm e}^{-2x}, \qquad x\in \mathbb{R}_{++},
\end{equation}
e, as a consequence, first expression in \eqref{eq:dfs} give us
\[
{\rm hess}_x\ \varphi(x,\tau)=\tau{\rm hess}\ f_2(x),\quad \forall\, x\in\mathbb{R}_{++}\quad\forall\,\tau\in[0,1].
\]
Note that, for $0<\epsilon< 1/4$ and $\Omega=(\epsilon,+\infty)$, ${\rm hess}\ f_2$ is bounded on $\Omega$ and therefore $\grad f_2$ is Lipschitz on $\Omega$. We denote by $L$ the constant of Lipschitz of $\grad f_2$. From the last equality ${\rm hess}_x\ \varphi(.,\tau)$ is also bounded on $\Omega$ and $\grad_x\varphi(.,\tau)$ is Lipschitz on $\Omega$ with constant $L_{\tau}=\tau L$ for all $\tau\in[0,1]$. Besides, $\sup_{\tau\in[0,1]}L_{\tau}=L<+\infty$.

We claim that $f(x)=\max_{j=1,2}f_j(x)$. Indeed, note that $f_2(x)-f_1(x)>0$ for $x\in(0,1)$, $f_2(x)-f_1(x)<0$ for $x\in(1,+\infty)$ and $f_1(1)=f_2(1)$. Thus the affine  function  $[0, 1]\ni\tau \mapsto \varphi(x,\tau)$ satisfies
\[
\max_{\tau\in[0,1]}\varphi(x,\tau)=
\begin{cases}
f_1(x), \qquad x\in(0,1), \\
f_2(x), \qquad x\in(1,+\infty).
\end{cases}
\]
and the claim follows. With that characterization for $f$ all assumptions of Theorem \ref{MPP10} are verified, with $q=5/16$, $c=f(3/4)$ and $\delta=2/5$,
see  Example in \cite{GFO2008}.
Hence, letting $x^0\in \mathbb{R}_{++}$ and $\bar{\lambda}>0 $ such that $x^0\in L_f(f(q))$ and $L<\mu<\lambda_k\leq\bar{\lambda}$, the proximal point method, characterized in Theorem~\ref{MPP10}, can be applied for solving the above nonconvex problem.  

\begin{remark}
Function $f(x)=\max_{\tau}\varphi(x,\tau)$, in the above example, is  nonconvex  (in the Euclidean sense)  when restricted to any open neighborhood containing its minimizer $x^*=1$. Therefore, the local classical proximal point method (see \cite{Kaplan1998}) cannot be applied to minimize that function. Also, as $f$ is nonconvex in the Riemannian sense, the Riemannian proximal point method (see \cite{FO2000}) can not be applied to minimize that function, see Example in \cite{GFO2008} for more details.
\end{remark}

\section{Final Remarks}
We have extended the application of the proximal point method to
solve nonconvex optimization  problems on Hadamard manifold in the
case that the objective function is given by the maximum of a
certain infinite collection of continuously differentiable functions. Convexity
of the auxiliary problems is guaranteed with the choice
appropriate regularization parameters in relation to the constants
of Lipschitz of the field gradients of the functions which they
compose the class in subject. With regards to the Theorem
\ref{MPP10}, in the particular case that $\varphi(.,\tau)$ is convex on $\Omega$ for $\tau\in
T$, convexity of the auxiliary problems is guaranteed without need
of restrictive assumptions on the regularization parameters.
Besides, as observed in Remark \ref{re:tconver}, the additional
assumptions {\bf h2} and {\bf h3} are satisfied whenever $\Omega$ is
bounded.


\end{document}